\title{Decentralized Feature-Distributed Optimization\\ for Generalized Linear Models}
\author{Brighton Ancelin}
\author{Sohail Bahmani}
\author{Justin Romberg}
\affil{School of Electrical \& Computer Engineering\\ Georgia Institute of Technology
        }
\begin{document}

\maketitle

\begin{abstract}
 We consider the ``all-for-one'' decentralized learning problem for generalized linear models.  The features of each sample are partitioned among several collaborating agents in a connected network, but only one agent observes the response variables.  To solve the regularized empirical risk minimization in this distributed setting, we apply the Chambolle--Pock primal--dual algorithm to an equivalent saddle-point formulation of the problem. The primal and dual iterations are either in closed-form or reduce to coordinate-wise minimization of scalar convex functions.  We establish convergence rates for the empirical risk minimization under two different assumptions on the loss function (Lipschitz and square root Lipschitz), and show how they depend on the characteristics of the design matrix and the Laplacian of the network.
\end{abstract}

\section{Introduction}\label{sec:intro}
Let $\ell\st \mbb R\times \mbb R \to \mbb R_{\ge0}$ denote a given \emph{sample loss function} that is convex and, for  simplicity, differentiable in its first argument. Given data points $(\mb x_1,y_1),\dotsc,(\mb x_n,y_n)\in\mbb R^d\times \mbb R$ and a convex regularization function $r(\cdot)$, we consider the minimization of regularized empirical risk in generalized linear models, i.e.,
\[
    \min_{\mb \theta\in \mbb R^d} \frac{1}{n}\sum_{i=1}^n \ell(\mb x_i^\T\mb \theta,y_i) + r(\mb \theta)\,,
\]
in a ``non-standard'' distributed setting where the data features, rather than samples, are distributed among $m$ agents that communicate through a connected network.

The problem can be formally stated as follows. With $\mc A_1,\dotsc,\mc A_m$ denoting a partition of $[d]\defeq \{1,\dotsc,d\}$ into $m$ disjoint blocks, each agent $j\in [m]$ observes the \emph{local features} $\mb x_{j,i}\defeq (\mb x_i)_{\mc A_j}\in \mbb R^{d_j}$ for every $i\in [n]$, where $(\mb u)_{\mc A}$ denotes the restriction of $\mb u$ to the coordinates enumerated by the index set $\mc A$. Without loss of generality we may assume that each $\mc A_j$ is a set of $d_j$ consecutive indices and simply write\footnote{We denote the vertical concatenations using semicolons as the delimiters.} 
    \[\mb x_i = \bmx{\mb x_{1,i};&\dotsm ;& \mb x_{m,i}}\,.\] We also denote the $n\times d_j$ \emph{local design matrix} for agent $j\in [m]$ by
    \[\mb X_j = \bmx{\mb x_{j,1}& \dotsm &\mb x_{j,n}}^\T\,,\] and the full $n \times d$ design matrix by
    \[\mb X = \bmx{\mb X_1& \dotsm&\mb X_m} = \bmx{\mb x_1& \dotsm &\mb x_n}^\T\,.\]
We assume that only one of the agents, say the first agent, observes the response $(y_i)_{i=1}^n$ and the other agents only have access to their local features. There is an underlying communication network which can be abstracted by a \emph{connected} undirected graph $G$ over the vertex set $\mc V =[m]$. If distinct agents $j$ and $j'$ can communicate directly, then they are adjacent in $G$ and we write $j\sim_G j'$. The \emph{Laplacian} of the communication graph, which is central in the distributed computations of the optimization algorithms, is denoted by $\mb L$.

Using the shorthand
\begin{align*}
    \ell_i(\cdot) &\defeq \ell(\cdot,y_i)\,
\end{align*}
that we use henceforth to simplify the notation, we seek an approximation to the (regularized) \emph{empirical risk minimizer}
\begin{align}
    \hat{\mb \theta} &= \argmin_{\mb \theta\in \mbb R^d} \frac{1}{n}\sum_{i=1}^n \ell_i(\mb x_i ^\T\mb \theta)+r(\mb \theta)\,.\label{eq:ERM}
\end{align}
where the regularizer $r(\cdot)$ is typically used to induce a certain structure (e.g., sparsity) in $\hat{\mb \theta}$.

To solve this optimization in our distributed setting, we use a primal--dual formulation that accommodates local calculations. Specifically, with $\ell_i^*\st \mbb R\to \mbb R$ denoting the \emph{convex conjugate} of the function $\ell_i(\cdot)$, the minimization in \eqref{eq:ERM} can be formulated as the saddle-point problem
\begin{align*}
    \min_{\mb \theta\in \mbb R^d} \max_{\mb \lambda_1 \in \mbb R^n} \frac{1}{n}\mb \lambda_1^\T \mb X \mb \theta - \frac{1}{n}\sum_{i=1}^n \ell_i^*(\lambda_{1,i})+r(\mb \theta)\,,
\end{align*}
where $\mb \lambda_1 = \bmx{\lambda_{1,1};&\dotsm;&\lambda_{1,n}}$ is the dual variable. The regularizer $r(\mb \theta)$ might also be represented using its conjugate, making  the objective of the resulting saddle-point problem linear in the primal variable $\mb \theta$. However, to avoid the need for the ``dualization'' of the regularizer, we focus on the special but important case that the regularizer is \emph{separable} with respect to the agents.  Partitioning the coordinates of the primal variable $\mb \theta$ according to the partitioning of the features among the agents as 
\[\mb \theta = \bmx{\mb\theta_1;&\dotsm;&\mb\theta_m}\,,\] with $\mb \theta_j \in \mbb R^{d_j}$, we assume that the regularizer takes the form
\begin{align}
    r(\mb \theta) = \sum_{j=1}^m r_j(\mb \theta_j)\,,\label{eq:separable-regularizer}
\end{align} 
where for each $j\in[m]$ the convex functions $r_j(\cdot)$ have a simple \emph{proximal mapping} that is available to the $j$th agent.
Giving each agent its own version of the dual variable denoted by $\mb \lambda _j\in \mbb R^n$, we can express \eqref{eq:ERM} in a form which is amenable to distributed computations as
\begin{equation}
    \begin{aligned}
        \min_{\mb \theta\in \mbb R^d}\max_{\mb \lambda_1,\dotsc,\mb \lambda_m\in \mbb R^n}\ & \sum_{j=1}^m r_j(\mb \theta_j) + \frac{1}{n}\mb \lambda _j^\T \mb X_j\mb \theta _j-\frac{1}{n}\sum_{i=1}^n\ell^*_i(\lambda_{1,i})\\
        \sbjto\  & \mb L \bmx{\mb \lambda_1& \dotsm &\mb \lambda_m}^\T=\mb 0\,.
    \end{aligned}
    \label{eq:saddle-point-ERM}
\end{equation}
The constraint involving the Laplacian simply enforces $\mb \lambda_j = \mb \lambda_j'$ for all $j\sim_G j'$. With $\inp{\cdot,\cdot}$ denoting the usual (Frobenius) inner product henceforth, we can use the Lagrangian form of the inner optimization to express \eqref{eq:saddle-point-ERM} equivalently as
\begin{align}
        & \min_{\mb \theta\in \mbb R^d}\max_{\mb \lambda_1,\dotsc,\mb \lambda_m\in \mbb R^n}\min_{\mb V\in \mbb R^{n\times m}}\ \sum_{j=1}^m r_j(\mb \theta_j)+\frac{1}{n}\mb \lambda _j^\T \mb X_j\mb \theta _j-\frac{1}{n}\sum_{i=1}^n\ell_i^*(\lambda_{1,i}) +\frac{1}{n}\inp{\mb V^\T,\mb L\bmx[1pt]{\mb \lambda_1 & \dotsm & \mb \lambda_m}^\T}\nonumber\\
        =&  \min_{\mb \theta\in \mbb R^d}\min_{\mb V\in \mbb R^{n\times m}}\max_{\mb \lambda_1,\dotsc,\mb \lambda_m\in \mbb R^n}\  \sum_{j=1}^m  r_j(\mb \theta_j)+\frac{1}{n}\mb \lambda _j^\T \mb X_j\mb \theta _j-\frac{1}{n}\sum_{i=1}^n\ell_i^*(\lambda_{1,i}) +\frac{1}{n}\inp{\mb V\mb L,\bmx[1pt]{\mb \lambda_1 & \dotsm & \mb \lambda_m}}\nonumber\\
        = &  \min_{\substack{\mb \theta\in \mbb R^d \\ \mb V\in \mbb R^{n\times m}}}
        \max_{\mb \lambda_1,\dotsc,\mb \lambda_m\in \mbb R^n}\  \sum_{j=1}^m  r_j(\mb \theta_j)+\frac{1}{n}\mb \lambda _j^\T \left(\mb X_j\mb \theta _j+\mb V \mb L\mb e_j\right)-\frac{1}{n}\sum_{i=1}^n\ell_i^*(\lambda_{1,i}),\label{eq:primal-dual}
\end{align}
where the second line follows from strong duality.

In \ref{sec:algorithm} we describe the iterations based on the Chambolle--Pock primal--dual algorithm \cite{CP16} to solve the saddle-point problem \eqref{eq:primal-dual}. Our main result and the assumptions under which it holds are provided in \ref{sec:convergence}. Some numerical experiments are also provided in \ref{sec:experiments}. Proofs of the main result can be found in \ref{apx:Proofs}. 


\subsection{Related work}
Minimization of a sum of (convex) functions is the most studied problem in distributed optimization due to its prevalence in machine learning. The most commonly considered setting in the literature is by far the \emph{sample-distributed} setting, where each agent merely has access to one of the summands of the objective function that can be computed using the locally available samples.  The literature primarily considers two different communication models.  \emph{Centralized} first-order methods have a main computing agent that aggregates the local (sub)gradient evaluations of the other agents,
updates the iterate and sends it back to the other agents. Therefore, the communication time for these methods grows linearly with the \emph{diameter} of the underlying network. In contrast,  \emph{decentralized} first-order methods do not rely on a single aggregating agent; every agent maintains and updates a local copy of the candidate minimizer through local computations and communications with its immediate neighbors, and consistency of the solution across agents is achieved either through local averaging or consensus constraints. Due to the diffusion-style nature of the iterations, the convergence rate of these methods depends on a certain notion of \emph{spectral gap} of the communication graph.  Many algorithms have been introduced for sample-distributed decentralized convex optimization; surveys of the literature can be found in \cite{YYW+19,GRB+20}, and prominent references include \cite{JKJ+08,NO09,WE11,ZM12,DAW12,SBB+17}.  In general, the computation+communication complexity of these algorithms to find an $\epsilon$-accurate solution range from the ``slow rate'' of $O(\varepsilon^{-2})+O(\varepsilon^{-1})$ for Lipschitz-continuous convex functions, to the ``linear rate'' of $O(\log(1/\varepsilon))$ for smooth and strongly convex functions. Lower bounds and (nearly) optimal algorithms for a few common objective classes are established in \cite{SBB+19}.

The ``feature-distributed'' setting that we consider is studied to a lesser extent, but has found important applications such as \emph{sensor fusion} \cite{Sas02} and \emph{cross-silo federated learning} \cite{KMA+21}. This setting is also relevant in \emph{parallelized computing} to amplify the performance of resource limited computing agents in large-scale problems.

Centralized federated learning protocols, in which the agents communicate with a server, with distributed features are proposed in \citep{HNJ+19} and \citep{CJS+20}.  \citet{HNJ+19} proposed the FDML method and,  under convexity and smoothness of the objective, established a regret bound for SGD that decays with the number of iterations $T$ at the rate of $O(1/\sqrt{T})$. It is also assumed in this result that the iterates never exit a neighborhood of the true parameter, basically imposing the strong convexity on the objective in an implicit form. \citet{CJS+20} proposed a method called VAFL, in which a server maintains a global parameter and each client operates on local features and parameters that determine the client's corresponding predictor. The clients and the server communicate in an asynchronous fashion and exchange the value of clients' predictors and the gradients of the sample loss with respect to these predictors. Under certain models of the communication delays that impose the asynchrony, a variant of stochastic gradient descent is shown to converge at a rate $O(1/T)$ under strong convexity. The performance of VAFL in the case of smooth nonconvex objectives and nonlinear predictors that are separable across the agents is also considered in \citep{CJS+20}. However, in this general setting where the guarantees are inevitably weaker, only the temporal average of the squared norm of the gradients (in expectation with respect to the SGD samples) are shown to converge at a rate $O(1/\sqrt{T})$.

The CoLa algorithm of \citet{HBJ18} considers a ubiquitous class of convex minimization problems in machine learning and statistics that involve  \emph{linear predictors}, in the decentralized distributed setting. Following the formulation of \citep{SFM+}, a pair of convex programs that are dual to each other are considered in \citep{HBJ18} depending on whether the data is split across the samples, or across the features. This latter setting is the closest related work in the literature to the present paper. The main step in each iteration of the CoLa algorithm is a regularized convex quadratic minimization. This minimization step is generally nontrivial and needs to be performed by a dedicated subroutine, though the analysis accommodates subroutines that compute inexact solutions. In contrast, our convex-concave saddle point formulation of the problem leads to iterations in which every agent evaluates either a closed-from expression or a simple proximal operator, except for one agent whose computations are as simple as performing one-dimensional strongly convex minimization for each dual coordinate. Furthermore, while our algorithm achieves an accuracy of $O(1/T)$ after $T$ iterations similar to the CoLa (in the general convex setting), our convergence analysis applies to the broader class of square root Lipschitz loss functions, defined below in \ref{sec:convergence}, that includes the usual smooth loss functions as special case \citep[Lemma 2.1]{SST10}.

\citet{ADW+15, GVAW18} present algorithms based on ADMM for solving decentralized least-squares problems with distributed features, and establish asymptotic convergence.  A feature-decentralized algorithm for logistic regression is presented in \citep{SNT07}, though no convergence guarantees are given.

Finally, the primal-dual algorithm we present in the next section is related to the distributed saddle point algorithm proposed by \citet{MC17} applied to the problem of optimizing a sum of functions of independent variables that are tied together through linear inequality constraints (see Remark III.1 in that paper).

\section{The primal--dual algorithm}\label{sec:algorithm}
Let $f$ and $g$ be convex functions such that $f$ is smooth and has a tractable first-order oracle, and the possibly nonsmooth $g$ admits a tractable proximal mapping. Furthermore, let $h$ be a convex function whose \emph{convex conjugate}, denoted by $h^*$, admits a tractable proximal mapping. The Chambolle--Pock primal--dual algorithm \cite{CP16}
solves the saddle-point problem
\begin{align*}
    \min_{\mb z}\max_{\mb \lambda}\ & f(\mb z)+g(\mb z) +\mb \lambda^\T\mb K \mb z - h^*(\mb \lambda)\,,
\end{align*}
for a given matrix $\mb K$. Denoting the columns of $\mb V$ by $\mb v_1,\dotsc,\mb v_m$, and the Kronecker product by $\otimes$, the optimization problem \eqref{eq:primal-dual} fits into the above formulation by choosing
\begin{align*}
\mb z &= \bmx{\mb \theta_1;& \dotsm;&\mb \theta_m; &\mb v_1;& \dotsm;& \mb v_m}\,,\\
\mb \lambda &= \bmx{\mb \lambda_1;& \dotsm;&\mb \lambda_m}\,,\\
\mb K &= \frac{1}{n}\bmx{\begin{matrix}
                \mb X_1 &  \mb 0 &  \mb 0 & \dotsm & \mb 0\\
                \mb 0 & \mb X_2 & \mb 0 & \dotsm & \mb 0\\
                \vdots & \vdots & & \ddots & \vdots \\
                \mb 0 & \mb 0 &  \mb 0 & \dotsm & \mb X_m 
            \end{matrix} & \mb L \otimes \mb I 
        }\,,\\
f &\equiv 0\,,\\
g(\mb z) &= r(\mb \theta) = \sum_{j=1}^m r_j (\mb \theta_j)\,,\\
\intertext{and} 
h^*(\mb \lambda)&=\frac{1}{n}\sum_{i=1}^n \ell_i^*(\lambda_{1,i})\,.
\end{align*}
The update rule of the Chambolle--Pock algorithm can be summarized as
\begin{align*}
    \mb z_{t+1}  =\argmin_{\mb z\in \mbb R^{d+mn}}\ & f(\mb z_t) + \inp{\nabla f(\mb z_t),\mb z - \mb z_t}+g(\mb z)+\mb \lambda_t^\T\mb K\mb z+\frac{1}{2\tau}{\norm{\mb z-\mb z_t}}_2^2\\
        \mb \lambda_{t+1} = \argmin_{\mb \lambda\in \mbb R^{mn}}\ & h^*(\mb \lambda) -\mb \lambda^\T \mb K\left(2\mb z_{t+1}-\mb z_t\right) + \frac{1}{2\sigma}{\norm{\mb \lambda -\mb \lambda_t}}_2^2\,,
\end{align*}
for appropriately chosen parameters $\tau,\sigma >0$. 
Writing this update explicitly for our special case, we have
\begin{align*}
	\mb z_{t+1} = \argmin_{\mb z\in \mbb R^{d+mn}}\ & r\left((\mb z)_{[d]}\right) + \mb \lambda_t^\T \mb K \mb z +  \frac{1}{2\tau}\norm{\mb z -\mb z_t}_2^2\\
    \mb \lambda_{t+1} = \argmin_{\mb \lambda\in \mbb R^{mn}}\ & \frac{1}{n}\sum_{i=1}^n \ell_i^*(\lambda_{1,i}) - \mb \lambda^\T \mb K(2\mb z_{t+1} - \mb z_t) + \frac{1}{2\sigma}\norm{\mb \lambda -\mb \lambda_t}_2 ^2\,.
\end{align*}
Expanding the linear term in the primal update, the equivalent local primal update for each agent $j\in [m]$ can be written as
\begin{align}
	\mb \theta_{j,t+1} &= \argmin_{\mb \theta_j\in \mbb R^{d_j}}\  r_j(\mb \theta_j)+\frac{1}{n}\mb \lambda_{j,t}^\T\mb X_j \mb \theta_j +  \frac{1}{2\tau}\norm{\mb \theta_j- \mb \theta_{j,t}}_2 ^2 \label{eq:theta-update}\\
	\mb v_{j,t+1} & = \argmin_{\mb v_j\in \mbb R^n}  \frac{1}{n}\left(\sum_{j'\in [m]\st j\sim_G j'}\mb \lambda_{j,t} -\mb \lambda_{j',t}\right)^\T\mb v_j+\frac{1}{2\tau}\norm{\mb v_j - \mb v_{j,t}}_2^2\,.\label{eq:v-update}
\end{align}
Similarly,  the equivalent local dual update for each agent $j\in[m]\backslash\{1\}$ is
\begin{equation}
    \begin{aligned}
    	\mb \lambda_{j,t+1} & = \argmin_{\mb \lambda_j\in \mbb R^n} -\frac{1}{n}\mb \lambda_j ^\T\left(\sum_{j'\in [m]\st j\sim_G j'}2(\mb v_{j,t+1}-\mb v_{j',t+1})-\mb v_{j,t} +\mb v_{j',t}\right)\\ &\hphantom{=\argmin} -\frac{1}{n}\mb \lambda_j ^\T \mb X_j\left(2\mb \theta_{j,t+1}-\mb \theta_{j,t}\right)+\frac{1}{2\sigma}\norm{\mb \lambda _j -\mb \lambda_{j,t}}_2^2\,.
    \end{aligned} \label{eq:lambda_2+-update}
\end{equation}
The fact that $h^*(\cdot)$ depends entirely on $\mb \lambda_1$ makes the local dual update for the first agent (i.e., $j=1$) different and in the form 
\begin{equation}
    \begin{aligned}
    	\mb \lambda_{1,t+1} &= \argmin_{\mb \lambda_1\in \mbb R^n} \frac{1}{n}\sum_{i=1}^n \ell_i^*(\lambda_{1,i})  -\frac{1}{n}\mb \lambda_1 ^\T\left(\sum_{j'\in [m]\st 1\sim_G j'}2(\mb v_{1,t+1}-\mb v_{j',t+1})-\mb v_{1,t} +\mb v_{j',t}\right)\\ &\hphantom{=\argmin}  -\frac{1}{n}\mb \lambda_1 ^\T \mb X_1\left(2\mb \theta_{1,t+1}-\mb \theta_{1,t}\right) +\frac{1}{2\sigma}\norm{\mb \lambda _1 -\mb \lambda_{1,t}}_2^2\,,
	\end{aligned}\label{eq:lambda_1-update}
\end{equation}
where the scalars $(\lambda_{1,i})_i$ denote the coordinates of $\mb \lambda_1$ and should not be confused with the vectors $(\mb \lambda_{1,t})_t$. The primal update \eqref{eq:theta-update} is simply an evaluation of the proximal mapping of $\tau r_j$ denoted by $\prox_{\tau r_j}\left(\mb u\right) =\argmin_{\mb u'} \tau r_j(\mb u') + \norm{\mb u' - \mb u}_2^2/2$. The updates \eqref{eq:v-update} and \eqref{eq:lambda_2+-update} can also be solved in closed-form. While \eqref{eq:lambda_1-update} does not admit a similar closed-form expression, it can be equivalently written in terms of the functions $\ell_1(\cdot),\dotsc,\ell_n(\cdot)$ using the separability of the objective function and the relation between the \emph{Moreau envelope} of a function and its convex conjugate \cite[Proposition 13.24]{BC2011}. Therefore, we can summarize the iterations as
\begin{align}
    \mb \theta_{j,t+1} & = \prox_{\tau r_j}\left(\mb \theta_{j,t} - \frac{\tau}{n}\mb X_j ^\T \mb \lambda_{j,t}\right)\,, & \text{for}\ j\in[m] \,,\label{eq:FD-GLM-1}\\
    \mb v_{j,t+1} & = \mb v_{j,t} - \frac{\tau}{n} \sum_{j'\in [m]\st j\sim_G j'}\mb \lambda_{j,t} -\mb \lambda_{j',t}\,, & \text{for}\ j\in[m]\,,\label{eq:FD-GLM-2}\\
    \mb \lambda_{j,t+1} & = \mb \lambda_{j,t} + \frac{\sigma}{n}  \mb X_j\left(2\mb \theta_{j,t+1}-\mb \theta_{j,t}\right)& \text{for}\ j\in[m]\backslash\{1\}\,,\label{eq:FD-GLM-3}\\
    &\ + \frac{\sigma}{n} \sum_{j'\in [m]\st j\sim_G j'}2(\mb v_{j,t+1}-\mb v_{j',t+1})-\mb v_{j,t} +\mb v_{j',t}\,, \nonumber\\
	\mb \lambda_{1,t+1} & = \argmin_{\mb \lambda_1\in \mbb R^n} \frac{1}{n}\sum_{i=1}^n\ell_i\left(\frac{n}{\sigma} \mb \left(\mb \lambda_{1,t+1/2} - \mb \lambda_1\right)_i\right) + \frac{1}{2 \sigma}\norm{\mb \lambda _1}_2 ^2\,,\label{eq:FD-GLM-4}
\end{align}
where $(\mb u)_i$ denotes the $i$th coordinate of a vector $\mb u$, and the ``intermediate dual iterate'' $\mb \lambda_{1,t+1/2}$ is defined as
\begin{align}
    \mb \lambda_{1,t+1/2} = \mb \lambda_{1,t} + \frac{\sigma}{n}\mb X_1\left(2\mb \theta_{1,t+1}-\mb \theta_{1,t}\right) + \frac{\sigma}{n}\sum_{j'\in [m]\st 1\sim_G j'}\hspace*{-1ex}2(\mb v_{1,t+1}-\mb v_{j',t+1})-\mb v_{1,t} +\mb v_{j',t}\,. \label{eq:FD-GLM-5}
\end{align}
Interestingly, \eqref{eq:FD-GLM-4} is a separable optimization with respect to the coordinates of $\mb \lambda_1$, i.e., for each $i\in [n]$ we have 
\begin{align*}
    \left(\mb \lambda_{1,t+1}\right)_i & = \argmin_{\lambda \in \mbb R} \frac{1}{n}\ell_i\left(\frac{n}{\sigma} \mb \left(\mb \lambda_{1,t+1/2}\right)_i-\lambda\right) + \frac{1}{2 \sigma}\lambda^2\,.
\end{align*}
Therefore, \eqref{eq:FD-GLM-4} admits efficient and parallelizable solvers.

\section{Convergence guarantees}\label{sec:convergence}
We begin by stating a few assumptions that will be used to provide convergence guarantees for the primal iterates $(\mb \theta_{j,t})_{t\ge 1}$. Recall the  assumptions that the loss function $\ell(\cdot,\cdot)$ is \emph{nonnegative} and the regularizer is separable as in \eqref{eq:separable-regularizer}. We will provide convergence rates for two different classes of loss functions. First, the Lipschitz loss functions, for which there exists a constant \(\rho\ge0\) such that
    \begin{align*}
    |\ell(u,w)-\ell(v,w)|&\le \rho|u-v|\,, &\text{for all}\ u,v,w\in\mathbb{R}\,.
    \end{align*}
By differentiability of $\ell(\cdot,\cdot)$ in its first argument, the condition above is equivalent to
    \begin{align}
        \left|\frac{\d\ell(u,v)}{\d u}\right| & \le \rho, & \text{for all}\ u,v\in\mathbb{R}\,.   \label{eq:Lipschits-condition} \tag{Lip.}
    \end{align}
Examples of the Lipschitz loss functions are the absolute loss, the Huber loss, and the logistic loss.
Second, the \emph{square root Lipschitz} loss functions, for which there exists a constant \(\rho\ge0\) such that
    \begin{align*}
        |\sqrt{\ell(u,w)}-\sqrt{\ell(v,w)}|&\le \frac{\rho}{2}|u-v|\,, &\text{for all}\ u,v,w\in\mbb{R}\,.
    \end{align*}
Again, invoking differentiability of $\ell(\cdot,\cdot)$ we can equivalently write 
    \begin{align}
       \left|\frac{\d\ell(u,v)}{\d u}\right|&\le \rho \sqrt{\ell(u,v)}\,. \label{eq:sqrt-Lipschitz-condition} \tag{$\sqrt{\phantom{1}}$-Lip.}
    \end{align}
Examples of the square root Lipschitz loss functions are the squared loss, the Huber loss. 

Furthermore, we assume that for some known constant $R>0$ the empirical risk minimizer $\hat{\mb \theta}$ is bounded as
    \begin{align}
        \norm{\hat{\mb \theta}}_2 &\le R\,. \label{eq:theta-radius} \tag{minimizer bound}
    \end{align}
We also assume that the agents are provided with the constant $\chi$ that bounds the usual operator norm of the design matrix as
\begin{align}
    \norm{\mb X} & \le \chi\,.  \label{eq:design-radius} \tag{design bound}
\end{align}
The constants $\delta>0$ that bounds the spectral gap of the network as
    \begin{align}
        \norm{\mb L^\dagger} &\le \delta^{-1} \label{eq:spectral-gap} \tag{spectral gap}\,,
    \end{align}
with $\mb M^\dagger$ denoting the Moore-Penrose pseudoinverse of the matrix $\mb M$, as well as the constant $D>0$ that bounds the operator norm of the Laplacian as
\begin{align}
    \norm{\mb L} &\le D \label{eq:Laplacian-norm} \tag{Laplacian bound}\,,
\end{align}
are also provided to the agents. Because $n\norm{\mb K}\le \max_{j\in [m]} \norm{\mb X_j}+\norm{\mb L\otimes \mb I}\le \norm{\mb X}+\norm{\mb L}$, instead of assuming an additional bound for $\norm{\mb K}$, we will use the bound $\norm{\mb K}\le (\chi +D)/n$.

\begin{thm}\label{thm:main}
    Suppose that the $m$ agents are given the positive constants $R$, $\chi$, $\delta$ and $D$ that respectively satisfy \eqref{eq:theta-radius}, \eqref{eq:design-radius}, \eqref{eq:spectral-gap}, and \eqref{eq:Laplacian-norm}, so that they can choose $\sigma=m^{1/2}n^{3/2}\rho/\left((\chi+D)R\sqrt{1+2\chi^2/\delta^2}\right)$ and $\tau=n^2/\left((\chi +D)^2\sigma\right)$. Denote the temporal average of the vectors ${\mb \theta}_{j,t}$ over the first $T\ge 1$ iterations by
    \begin{align}
        \bar{\mb \theta}_j &=\frac{1}{T}\sum_{t=1}^T \mb \theta_{j,t}\,, & \text{for}\ j\in[m]\,,\label{eq:ergodic-theta_j}
    \end{align} 
    and let $\bar{\mb \theta} = \bmx{\,\bar{\mb \theta}_1; & \dotsm ;& \bar{\mb \theta}_m}$. Under the Lipschitz loss model \eqref{eq:Lipschits-condition} we have
    \begin{align}
        \frac{1}{n}\sum_{i=1}^n\ell_i(\bar{\mb \theta}) + r(\bar{\mb \theta}) & \le \frac{1}{n}\sum_{i=1}^n\ell_i(\hat{\mb \theta}) + r(\hat{\mb \theta}) +\frac{2(\chi+D)R\rho}{(n/m)^{1/2}T}\sqrt{1+\frac{2\chi^2}{\delta^2}} \label{eq:main-bound-Lipschitz}
    \end{align}
    Similarly, under the square root Lipschitz loss model \eqref{eq:sqrt-Lipschitz-condition} and for $T\ge 2mn\rho^2/\sigma$  we have an ``isomporphic convergence'' given by
    \begin{align}
        & \frac{1}{n}\sum_{i=1}^n\ell_i(\bar{\mb \theta}) + r(\bar{\mb \theta}) \nonumber\\
        & \le \left(1+\frac{2(\chi+D)R\rho}{m^{1/2}n^{3/2}T}\sqrt{1+\frac{2\chi^2}{\delta^2}}\right)\left(\frac{1}{n}\sum_{j=1}^n \ell_i\left((\mb X\hat{\mb \theta})_i\right) + r(\hat{\mb \theta}) +\frac{(\chi+D)R\rho}{(n/m)^{1/2}T}\sqrt{1+\frac{2\chi^2}{\delta^2}}\right)\,. \label{eq:main-bound-sqrt-Lipschitz}
    \end{align}
\end{thm}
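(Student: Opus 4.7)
My plan is to apply the standard ergodic convergence bound for the Chambolle--Pock algorithm \cite{CP16} and then specialize the two comparators to relate the primal--dual gap to the ERM gap. The Chambolle--Pock bound asserts that whenever $\tau\sigma\norm{\mb K}^2\le 1$, the iterates $(\mb z_t,\mb \lambda_t)$ with ergodic averages $(\bar{\mb z}_T,\bar{\mb \lambda}_T)$ satisfy, for every $(\mb z,\mb \lambda)$,
\[F(\bar{\mb z}_T,\mb \lambda)-F(\mb z,\bar{\mb \lambda}_T)\ \le\ \frac{\norm{\mb z-\mb z_0}_2^2}{2\tau T}+\frac{\norm{\mb \lambda-\mb \lambda_0}_2^2}{2\sigma T}\,,\]
where $F$ is the saddle function of \eqref{eq:primal-dual} and the norms treat matrix blocks as vectorized.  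The bound $\norm{\mb K}\le(\chi+D)/n$ from \ref{sec:convergence}, together with the stated $\tau,\sigma$, makes the product $\tau\sigma=n^2/(\chi+D)^2$ saturate the step-size condition.  Initializing at $\mb z_0=\mb 0$ and $\mb \lambda_0=\mb 0$, it remains to engineer a primal comparator $\mb z^\star$ with $F(\mb z^\star,\bar{\mb \lambda}_T)\le\tfrac{1}{n}\sum_i\ell_i((\mb X\hat{\mb \theta})_i)+r(\hat{\mb \theta})$ and a dual comparator $\mb \lambda^\star$ with $F(\bar{\mb z}_T,\mb \lambda^\star)=\tfrac{1}{n}\sum_i\ell_i(\mb x_i^\T\bar{\mb \theta})+r(\bar{\mb \theta})$, both of modest norm.

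On the primal side I would take $\mb z^\star=(\hat{\mb \theta},\hat{\mb V})$ with $\hat{\mb V}\in\mbb R^{n\times m}$ chosen so that the coupling vector $\hat{\mb V}\mb L\mb e_j$ cancels the local prediction $\mb X_j\hat{\mb \theta}_j$ at every non-observing agent and adds to $\mb X_1\hat{\mb \theta}_1$ to form the full prediction $\mb X\hat{\mb \theta}$ at agent~$1$.  This reduces to $\hat{\mb V}\mb L=\mb P$, where $\mb P\in\mbb R^{n\times m}$ has first column $\mb X\hat{\mb \theta}-\mb X_1\hat{\mb \theta}_1$ and $j$th column $-\mb X_j\hat{\mb \theta}_j$ for $j\ge 2$.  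The identity $\mb P\mb 1=\mb 0$ places the rows of $\mb P$ in the range of $\mb L$, so $\hat{\mb V}=\mb P\mb L^\dagger$ is admissible, and Fenchel--Young applied coordinatewise to $\ell_i^*$ bounds $F(\mb z^\star,\bar{\mb \lambda}_T)$ by the ERM objective at $\hat{\mb \theta}$.  A Frobenius estimate using $\norm{\mb L^\dagger}\le\delta^{-1}$, together with $\norm{\mb X\hat{\mb \theta}-\mb X_1\hat{\mb \theta}_1}_2^2\le\chi^2R^2$ and $\sum_{j\ge 2}\norm{\mb X_j\hat{\mb \theta}_j}_2^2\le\chi^2R^2$ (both consequences of $\norm{\mb X_j}\le\norm{\mb X}\le\chi$ and $\norm{\hat{\mb \theta}}_2\le R$), then gives $\norm{\hat{\mb V}}_2^2\le 2\chi^2R^2/\delta^2$, hence $\norm{\mb z^\star}_2^2\le R^2(1+2\chi^2/\delta^2)$.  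This spectral-gap-dependent estimate is the single most delicate step of the proof.

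On the dual side I would take the consensus comparator $\mb \lambda_j^\star=\mb \mu$ for every $j\in[m]$ with $\mu_i:=\ell_i'(\mb x_i^\T\bar{\mb \theta})\in\partial\ell_i((\mb X\bar{\mb \theta})_i)$.  Since $\mb L\mb 1=\mb 0$, the coupling involving $\bar{\mb V}_T$ disappears and Fenchel--Young becomes the equality $F(\bar{\mb z}_T,\mb \lambda^\star)=\tfrac{1}{n}\sum_i\ell_i(\mb x_i^\T\bar{\mb \theta})+r(\bar{\mb \theta})$.  Under \eqref{eq:Lipschits-condition} one has $\norm{\mb \lambda^\star}_2^2=m\sum_i\mu_i^2\le mn\rho^2$; inserting this and the primal estimate into the Chambolle--Pock bound, the prescribed $\tau,\sigma$ balance the two terms and \eqref{eq:main-bound-Lipschitz} follows.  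Under \eqref{eq:sqrt-Lipschitz-condition} one obtains instead $\norm{\mb \lambda^\star}_2^2\le m\rho^2\sum_i\ell_i(\mb x_i^\T\bar{\mb \theta})=mn\rho^2\cdot\tfrac{1}{n}\sum_i\ell_i(\mb x_i^\T\bar{\mb \theta})$, which is self-referential.  Writing $c=mn\rho^2/(2\sigma T)$, the gap inequality becomes
\[(1-c)\,\tfrac{1}{n}\sum_{i=1}^n\ell_i(\mb x_i^\T\bar{\mb \theta})+r(\bar{\mb \theta})\ \le\ \tfrac{1}{n}\sum_{i=1}^n\ell_i((\mb X\hat{\mb \theta})_i)+r(\hat{\mb \theta})+\tfrac{R^2(1+2\chi^2/\delta^2)}{2\tau T}\,;\]
using $r(\bar{\mb \theta})\ge 0$ to upgrade the left-hand side to $(1-c)(\tfrac{1}{n}\sum_i\ell_i(\mb x_i^\T\bar{\mb \theta})+r(\bar{\mb \theta}))$, and the hypothesis $T\ge 2mn\rho^2/\sigma$ to conclude $c\le 1/4$ and hence $1/(1-c)\le 1+2c$, yields the isomorphic bound \eqref{eq:main-bound-sqrt-Lipschitz}.
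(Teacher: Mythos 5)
Your proposal is correct and arrives at the stated bounds, but it handles the dual side by a genuinely different mechanism than the paper. The paper never fixes a dual comparator: it keeps the supremum over $\mb \lambda$ of $\mc E(\bar{\mb \theta},\bar{\mb V},\mb \lambda)-\norm{\mb \lambda}_2^2/(T\sigma)$, evaluates the $\mb \lambda_1$-block via \cref{lem:infimal-convolution} and the blocks $j\ge 2$ exactly, and then needs a separate convexity-plus-Young's-inequality argument to pass from agent~1's local prediction $\mb X_1\bar{\mb \theta}_1+\bar{\mb V}\mb L\mb e_1$ to the global prediction $\mb X\bar{\mb \theta}$, absorbing the consensus error into the quadratic terms produced by the maximization. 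Your consensus comparator $\mb \lambda_j^\star=\mb \mu$ with $\mu_i=\ell_i'((\mb X\bar{\mb \theta})_i)$ short-circuits all of this: $\mb L\mb 1=\mb 0$ kills the $\bar{\mb V}$-coupling, the blocks recombine into $\mb X\bar{\mb \theta}$, and Fenchel--Young holds with equality, so the left-hand side is exactly the regularized empirical risk at $\bar{\mb \theta}$ and the error enters only through $\norm{\mb \lambda^\star}_2^2/(2\sigma T)$, which is bounded by $mn\rho^2/(2\sigma T)$ under \eqref{eq:Lipschits-condition} and handled by the same self-referential rearrangement as the paper under \eqref{eq:sqrt-Lipschitz-condition}. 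This makes \cref{lem:infimal-convolution} and the paper's entire lower-bound subsection unnecessary; the primal comparator, its pseudoinverse construction, and the norm estimate $R^2(1+2\chi^2/\delta^2)$ are identical to the paper's. Two small caveats. First, the ergodic bound of \cite{CP16} carries a cross term $-\inp{\mb K(\mb z-\mb z_0),\mb \lambda-\mb \lambda_0}$ that you dropped; bounding it via Cauchy--Schwarz under $\tau\sigma\norm{\mb K}^2\le 1$ doubles both quadratic terms, which is exactly the source of the factor $2$ in \eqref{eq:main-bound-Lipschitz}, so you should carry that doubling through (your constants are otherwise a factor of two better than what the cited theorem actually yields). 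Second, like the paper's own step from its penultimate display to \eqref{eq:SQL-LB}, your upgrade of $(1-c)\tfrac{1}{n}\sum_i\ell_i+r(\bar{\mb \theta})$ to $(1-c)\bigl(\tfrac{1}{n}\sum_i\ell_i+r(\bar{\mb \theta})\bigr)$ uses $r\ge 0$, which is worth stating explicitly.
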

The prescribed $\tau$ and $\sigma$ are ``optimized'' for the Lipschitz model. The well-tuned choice of $\tau$ and $\sigma$ under the square root Lipschitz model is slightly different and depends on the minimum value of the objective. For simplicity, we used the former in the theorem for both models. 

For a better understanding of the convergence bounds \eqref{eq:main-bound-Lipschitz} and \eqref{eq:main-bound-sqrt-Lipschitz}, it is worth considering  more interpretable approximations of the quantities $D$ and $\delta$. With $\Delta(G)$ denoting the maximum degree of the graph $G$, we have  an elementary bound $\norm{\mb L}\le 2 \Delta(G)$, so it suffices to choose $D\ge 2\Delta(G)$. Furthermore, for a connected graph, $\norm{\mb L^\dagger}$ is reciprocal to the second smallest eigenvalue of $\mb L$, and we can invoke an inequality due to Mohar \cite[Theorem 2.3]{Moh91} that relates the spectral gap, diameter, and the maximum degree of a graph, we have $\norm{\mb L^\dagger}\ge2\left(\diam(G)-1-\log(m-1)\right)/\Delta(G)$ which can provide a general bound on how large $\delta$ can possibly be. Another inequality \cite[Theorem 4.2]{Moh91}, attributed to Brendan Mckay, also provides the bound $\norm{\mb L^\dagger}\le m\diam(G)/4$ which implies a conservative choice of $\delta \le 4/(m\diam(G))$. The networks that are \emph{(spectral) expanders} are more favorable as they typically have larger spectral gap and smaller maximum degree simultaneously. For instance, for $k$-regular \emph{Ramanujan graphs} we can choose $\delta = k-2\sqrt{k-1}$ \cite{LPS88,Moh92}.

The algorithm can be generalized by assigning weights to the edges of the network and choosing $\mb L$ to be the Laplacian of the weighted network. The effect of using weighted edges on the algorithm is that the simple summation iterates of the neighboring agents in \eqref{eq:FD-GLM-2}, \eqref{eq:FD-GLM-3}, and \eqref{eq:FD-GLM-5} (thereby \eqref{eq:FD-GLM-4}), will become weighted summations. Using weighted edges allows us, in principle, to optimize bounds \eqref{eq:main-bound-Lipschitz} and \eqref{eq:main-bound-sqrt-Lipschitz} by adjusting the edge weights. 
 
We have shown that we can solve \eqref{eq:ERM} in the feature-distributed setting and achieve a convergence rate of $O(1/T)$ under relatively simple assumptions. The iterations each agent has to solve is rather simple, including \eqref{eq:FD-GLM-4} thanks to its separability. However, there are a few limitations in the proposed framework that have to be considered. First, the agents cannot rely only on local information to choose $\tau$ and $\sigma$; in general they can obtain the required global information at the cost of extra communications. Second, the scope of the algorithm is limited by the fact that the loss function acts on linear predictors $\mb x_i^\T \mb \theta$. It is worth mentioning, however, that this limitation is basically necessary to stay in the realm of convex optimization; we are not aware of any widely used nonlinear predictor whose composition with standard loss functions is convex. Third, the considered saddle-point formulation incurs a significant communication and computation cost associated with the iterates $(\mb \lambda_{j,t})$ and $(\mb v_{j,t})$; it is not clear if this is inherent to the problem.

\section{Numerical Experiments}\label{sec:experiments}

We provide several numerical experiments to illustrate the behavior of the proposed algorithm with varying quantities of agents and communication graphs. In the case where computation is of greater cost than communication, we find that our algorithm can make use of parallelism to improve performance.

We solve the least squares problem
\[
    \minimize_{\mb \theta} \frac{1}{2} \norm{\mb X \mb \theta - \mb y }_2^2
\]
for a synthetic dataset of $2^{14}=16384$ samples and $2^{11}=2048$ features so that $\mb X$ is a $16384 \times 2048$ matrix. To construct the synthetic dataset, the design matrix $\mb X$, the ground truth vector ${\mb \theta}_\*$, and the noise vector $\boldsymbol{e}$ are all populated by i.i.d.\ samples of the standard normal distribution. The corresponding noisy response vector $\mb y$ is then computed as $\mb y = \mb X {\mb \theta}_\* + \boldsymbol{e}$. In all experiments, the features are partitioned equally among the agents, i.e., each agent has access to exactly $d/m$ features.

We explore the following communication graph structures:
\begin{itemize}
    \item {\it Complete Graph}: All agents are connected to all other agents.
    \item {\it Star Graph}: All agents are connected only to the first agent.
    \item {\it Erd\H{o}s--R\'{e}nyi Graph}: Each of the possible $\binom{m}{2}$ pairs of agents are connected with probability $p\in\{0.1,0.5\}$ independent of the other connections. To avoid violating the connectivity requirement of the communication graph (with high probability), we only consider graphs of $8$ or more agents in the case $p=0.5$, and graphs of $32$ or more agents in the case of $p=0.1$.
    \item {\it 2D Lattice Graph}: The agents are arranged in 2D space as a square lattice. Each agent is connected to its cardinal and diagonal neighbors. The first agent is located at one of the four center-most lattice points.
    \item {\it Random Geometric Graph}: Agents are assigned positions in the 2D unit square uniformly at random. A pair of agents are connected if the Euclidean distance between their positions is less than $0.3$. Again, to avoid violating the connectivity requirement of the communication graph (with high probability), we only consider $32$ agents or more.
\end{itemize}

As a baseline, we solve the single agent problem using the proposed primal-dual algorithm but with the Lagrange multiplier $\mb v$ terms fixed at zero, however we recognize that the problem choice could also be solved by other algorithms, e.g. gradient descent.  (For the single agent case, the Laplacian constraints of \eqref{eq:saddle-point-ERM} are trivially satisfied and can be omitted.)  \ref{fig:comparisons-iter5000} shows the convergence behavior of the proposed algorithm for each of the aforementioned communication graph structures. The complete graph tends to converge faster than any other graph for a fixed number of agents, and performs best at 64 agents (with 32 features per agent) instead of continually improving with increasing quantity of agents. Similarly, the Erd\H{o}s-R\'{e}nyi graphs perform best at 128 and 256 agents for $p=0.5$ and $p=0.1$, respectively. Convergence degrades as $p$ decreases. The random geometric graph performs very similarly to the Erd\H{o}s-R\'{e}nyi graph for $p=0.1$. Both the star and 2D lattice graphs perform increasingly worse as the quantity of agents increases. We speculate this is caused by a large quantity of comparatively small eigenvalues for the associated Laplacian matrices. 

\begin{figure}[t!]
    \centering
    \includegraphics[width=0.49\linewidth]{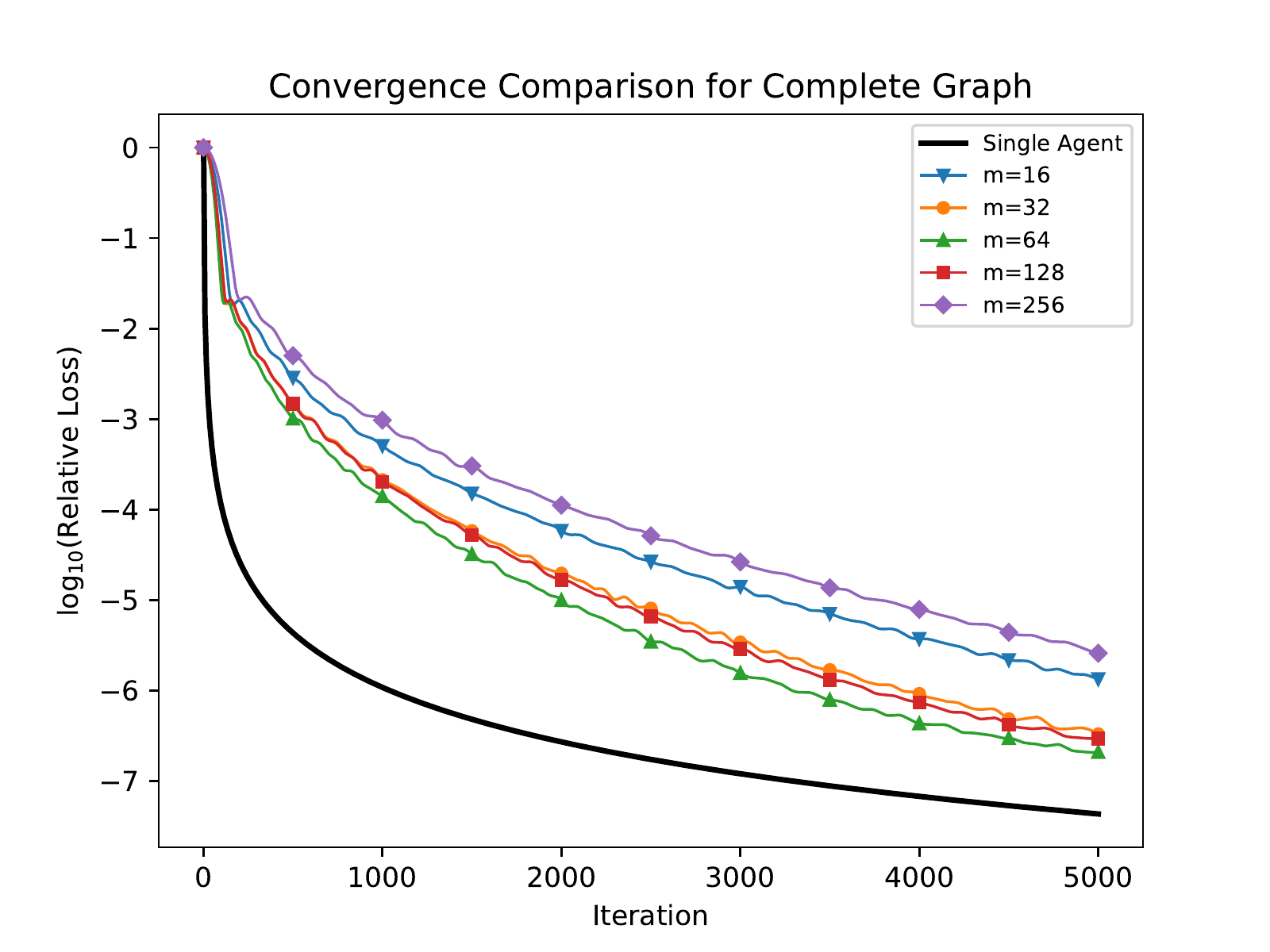}
    \includegraphics[width=0.49\linewidth]{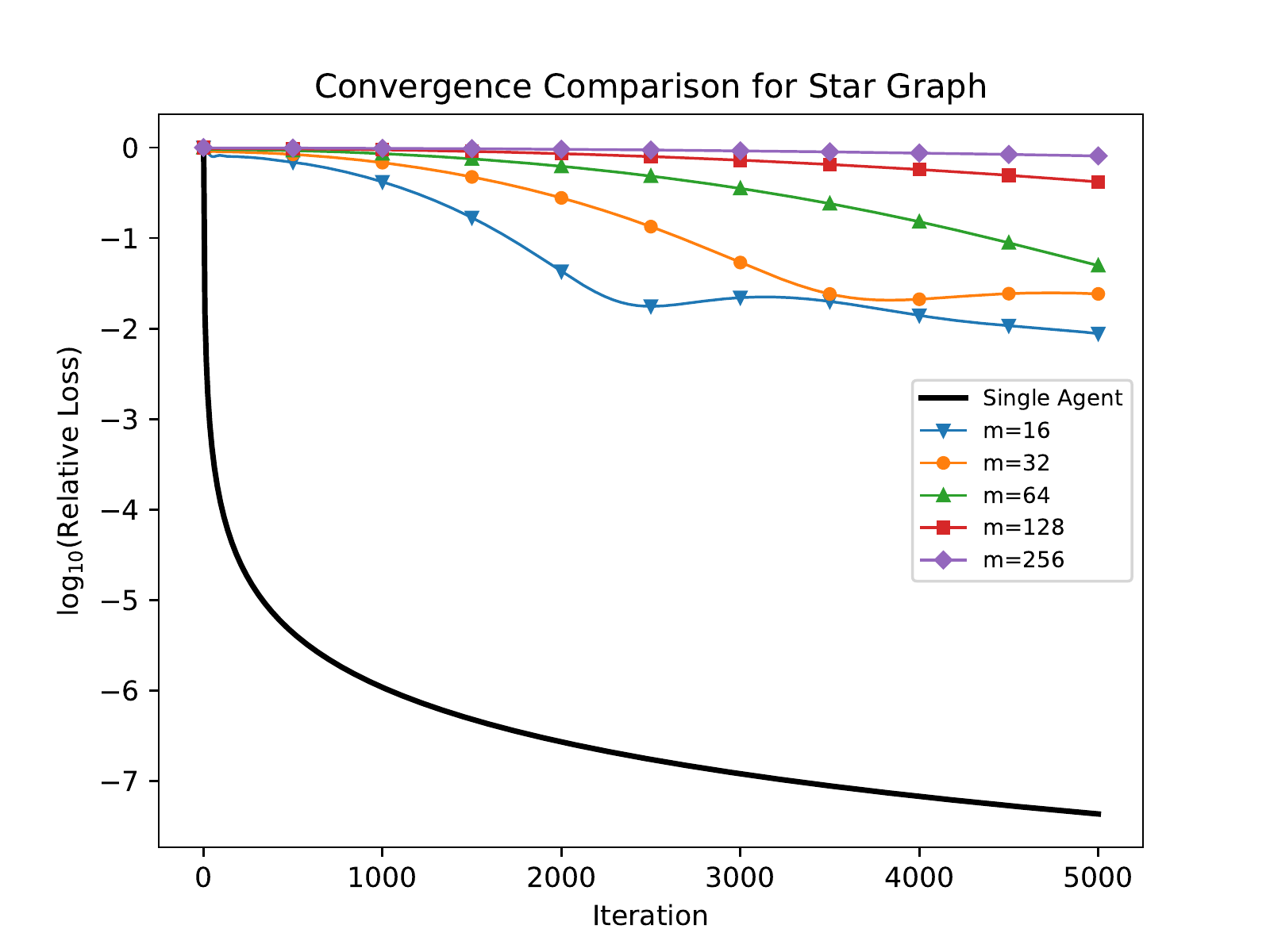}
    \includegraphics[width=0.49\linewidth]{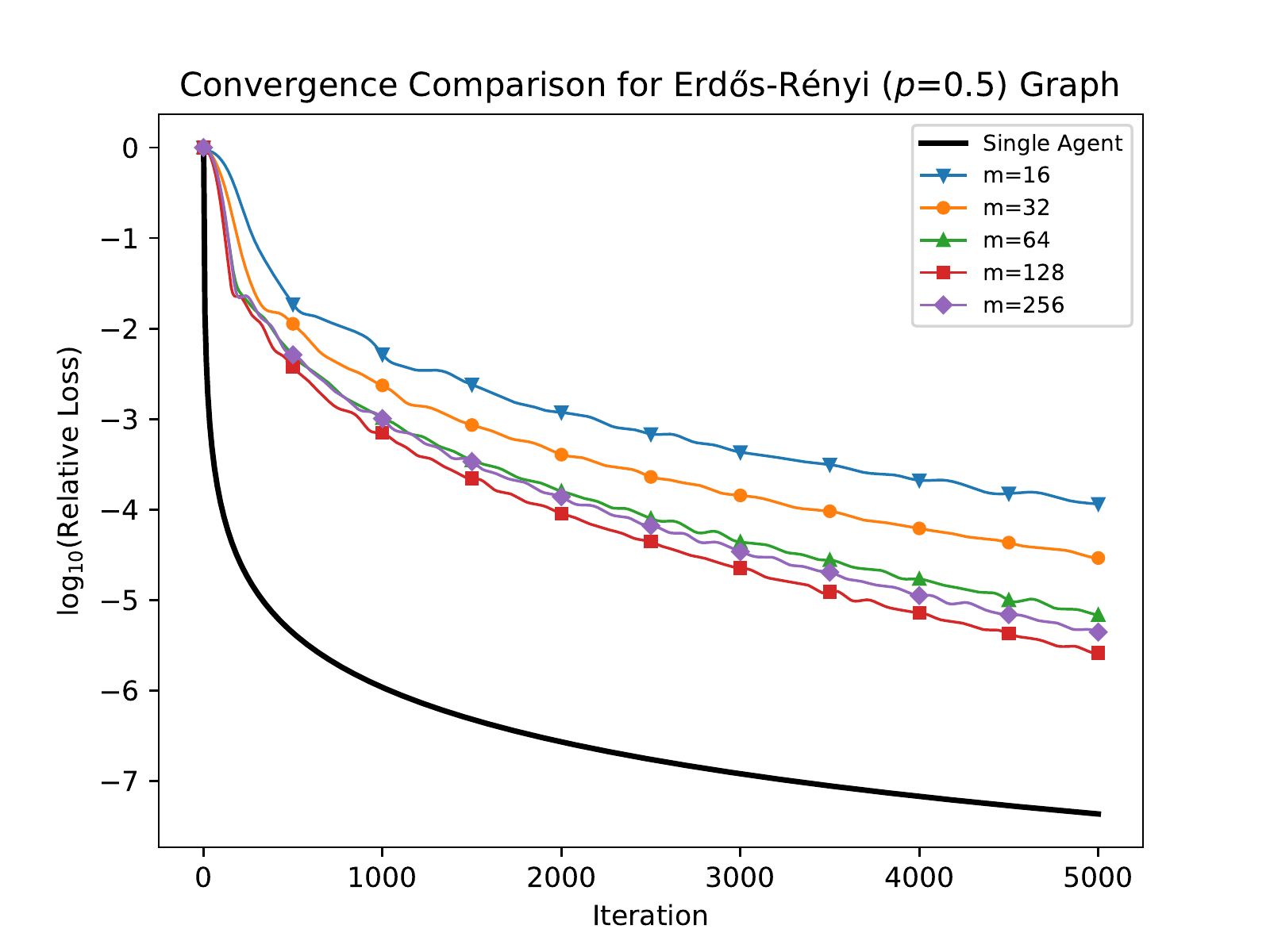}
    \includegraphics[width=0.49\linewidth]{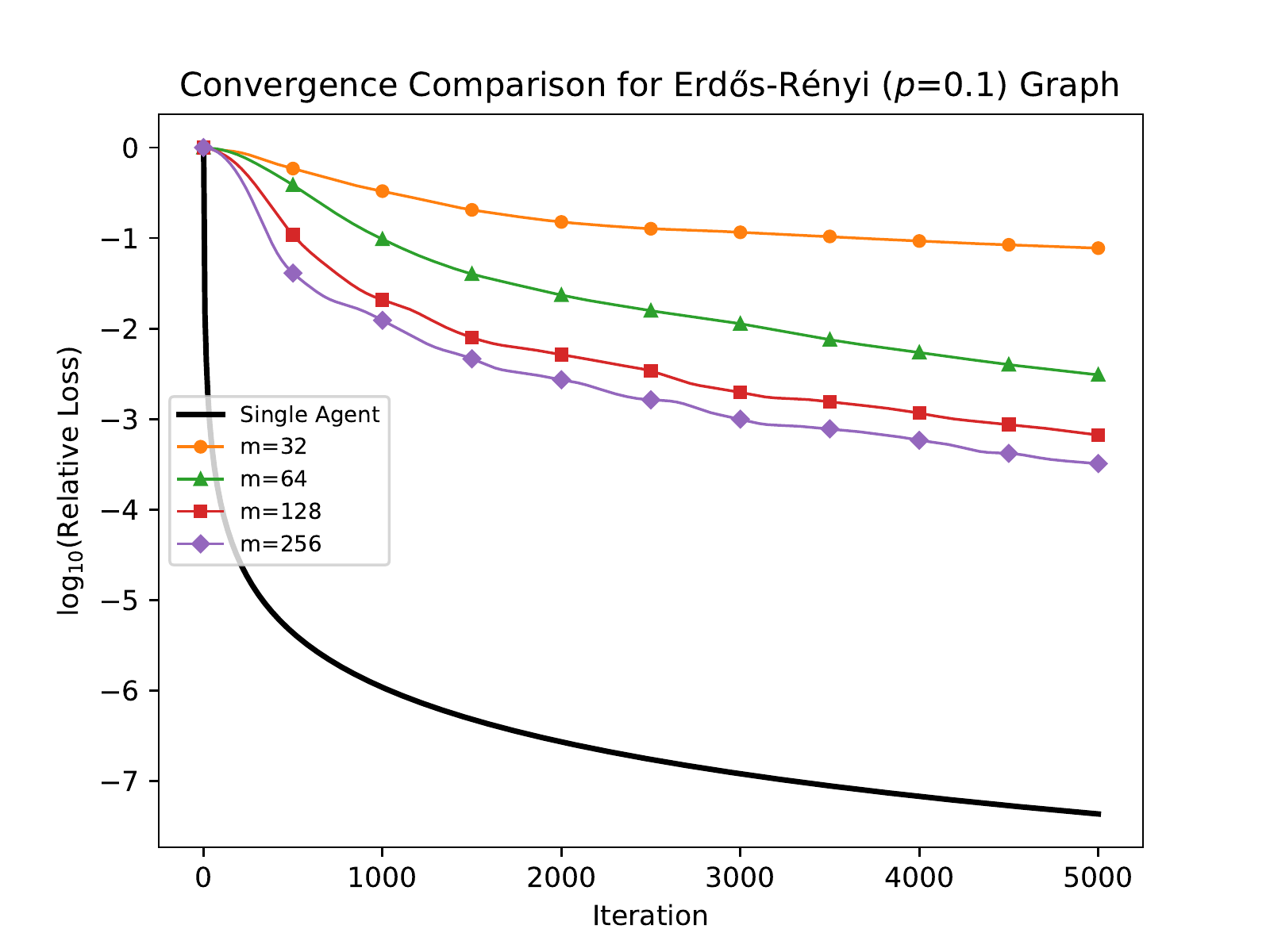}
    \includegraphics[width=0.49\linewidth]{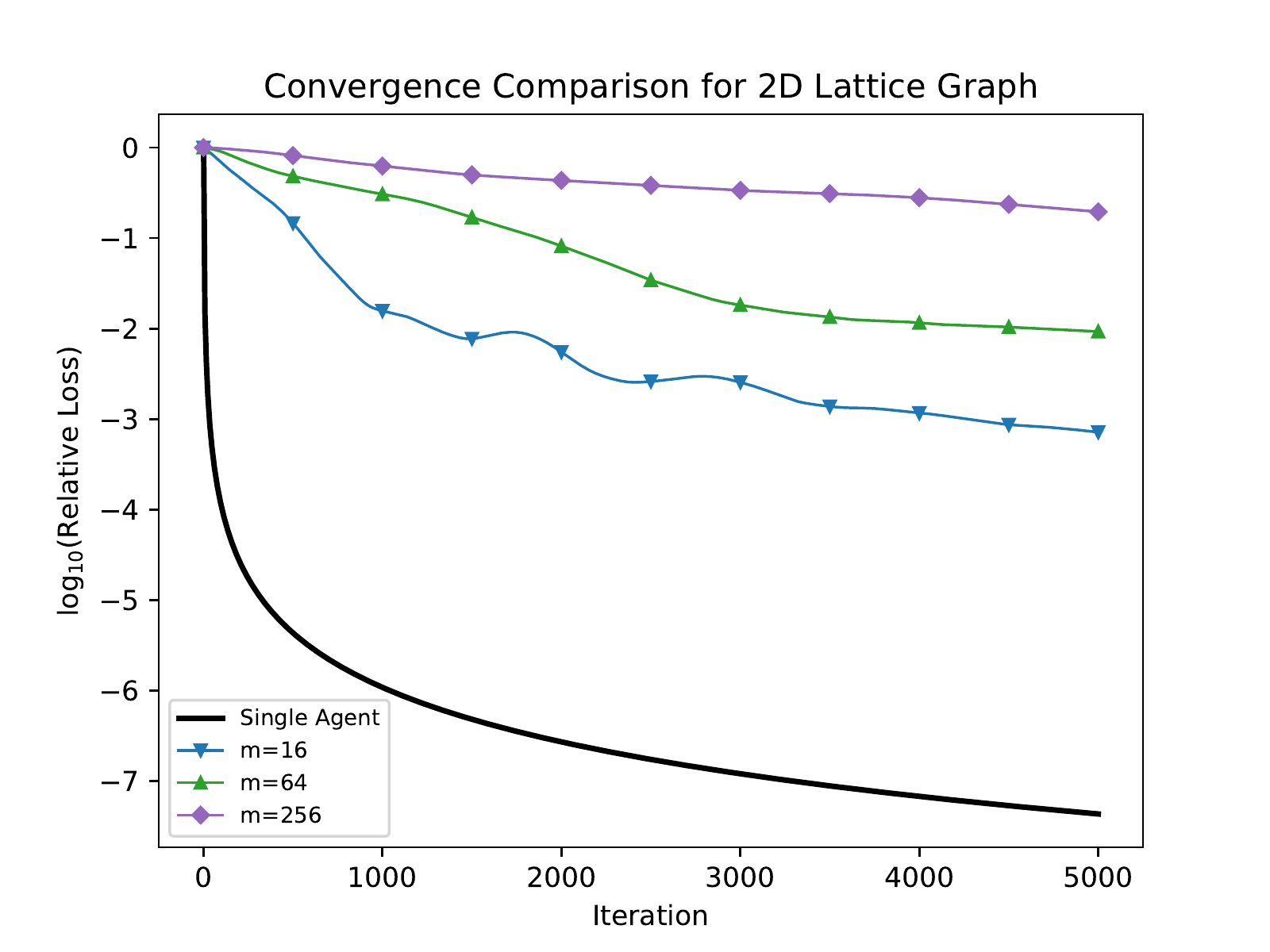}
    \includegraphics[width=0.49\linewidth]{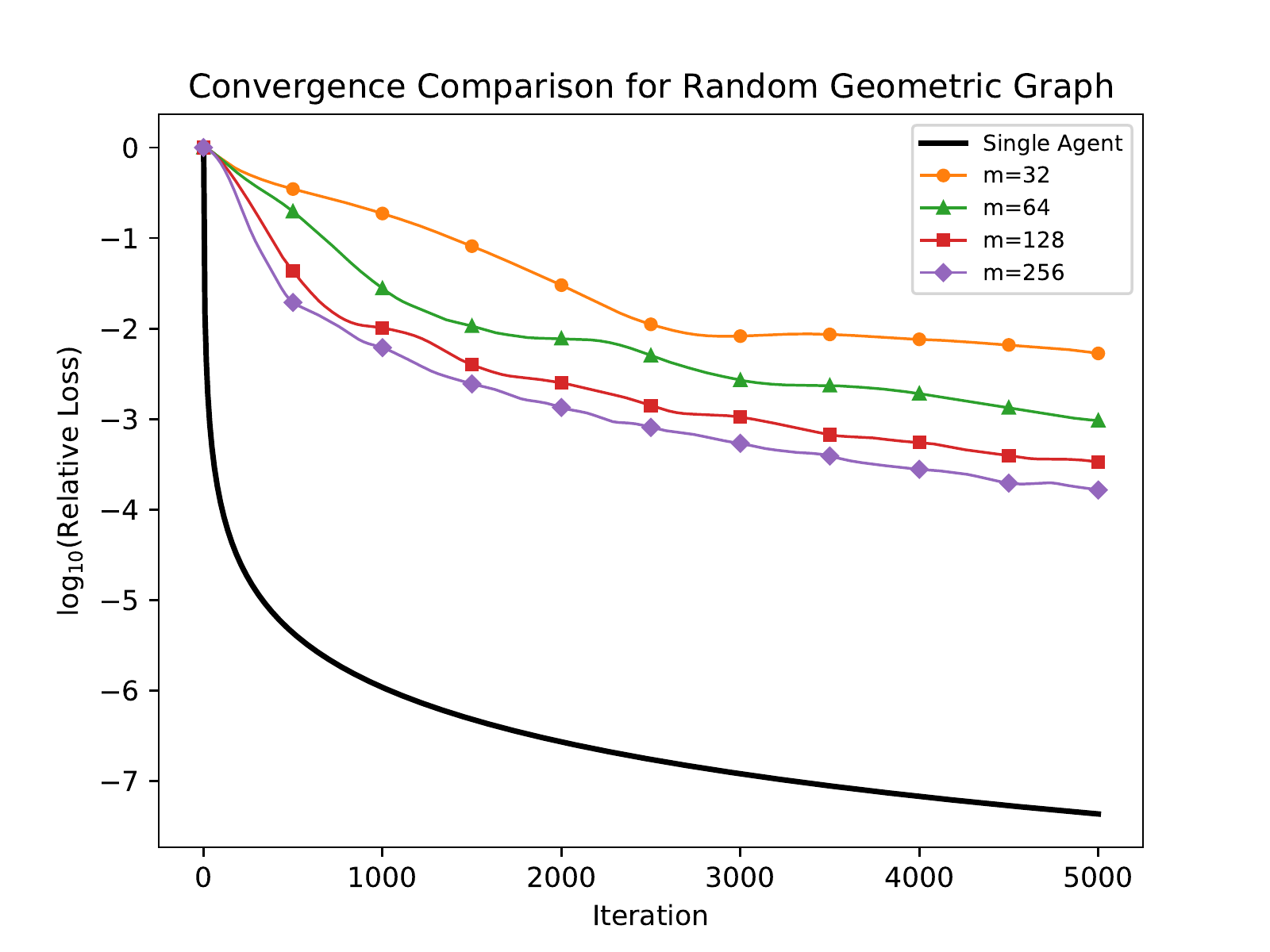}
    \caption{Plots depicting algorithm progress for varying communication graph structures and number of agents. The single agent progress is included in all plots for reference. With $\mc L_t$ denoting the objective (i.e., the regularized empirical risk) at $\bar{\mb \theta}_t$, and $\mc L_\*$ denoting the minimum value of the objective, the vertical axis represents the base-$10$ logarithm of the relative error defined as $\log_{10}\left(\frac{\mc L_t - \mc L_\*}{\mc L_0 - \mc L_\*}\right)$. The horizontal axis represents number of iterations completed.}
    \label{fig:comparisons-iter5000}
\end{figure}

If we assume a situation where cost is dominated by computation rather than communication, the proposed algorithm can achieve comparable performance to the single agent case even under relatively sparse graphs. Recall that $n$, $m$, and $d$ represent the number of samples, agents, and features, respectively, and that $\Delta(G)$ denotes the maximum degree of the communication graph $G$. One can show that each iteration of the proposed algorithm requires each agent complete $n (4 (d/m) + 2 \Delta(G) + 7) + 5 (d/m)$ floating point operations.\footnote{On a per-iteration per-agent basis, updating $\mb \theta$ according to \eqref{eq:FD-GLM-1} equates to $2 n (d/m) + 4 (d/m)$ operations, updating $\mb v$ according to \eqref{eq:FD-GLM-2} equates to $n (\Delta(G) + 3)$ operations, and updating $\mb \lambda$ according to \eqref{eq:FD-GLM-3} equates to $n (2 (d/m) + \Delta(G) + 4) + (d/m)$ operations. We omit the presumed negligible cost of the first agent solving \eqref{eq:FD-GLM-4}, which for the specific case of least squares would be an extra $2 n$ operations. For the specific case of {\it non-regularized} least squares, we could also omit $3 (d/m)$ operations from the $\mb \theta$ updates. \label{foot:flops}} In the single agent case, one can show $n (4 d + 1) + 5 d$ floating point operations are needed per iteration.\footnote{To compute the required operations in the single agent case, a similar calculation is performed to that of \ref{foot:flops} with caveats. The $\mb v$ quantities are absent, leading to a reduction of $n (\Delta(G) + 3)$ from the updates in \eqref{eq:FD-GLM-2} as well as $n (\Delta(G) + 3)$ from the updates in \eqref{eq:FD-GLM-3}.}

We also compare scenarios for a fixed number of operations per agent. As the number of agents increases $\mb X$ and $\mb \theta$ are increasingly split over more agents, effectively parallelizing the problem. This leads to a decrease in the number of operations per agent for the matrix-vector multiplies in \eqref{eq:FD-GLM-1} and \eqref{eq:FD-GLM-3} which dominate the operation cost. \ref{fig:comparisons-cost} illustrates how, under this cost paradigm, the relatively sparse Erd\H{o}s-R\'{e}nyi ($p=0.1$) and random geometric graphs with 256 agents achieve performance comparable to that of the single agent case. This speaks to the promise of the proposed algorithm for very large problem sizes over relatively sparse graphs.

\begin{figure}[t!]
    \centering
    \includegraphics[width=0.49\linewidth]{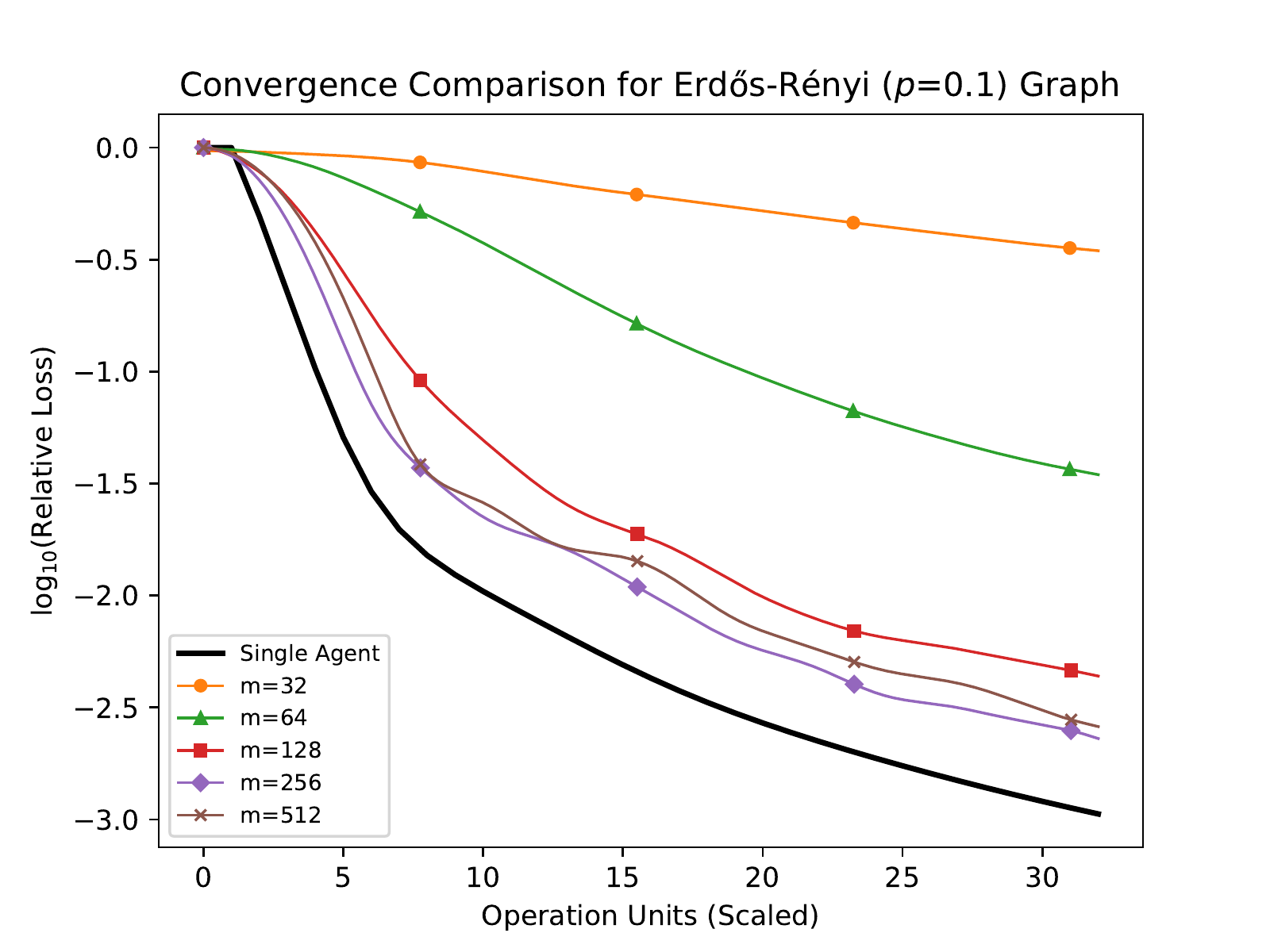}
    \includegraphics[width=0.49\linewidth]{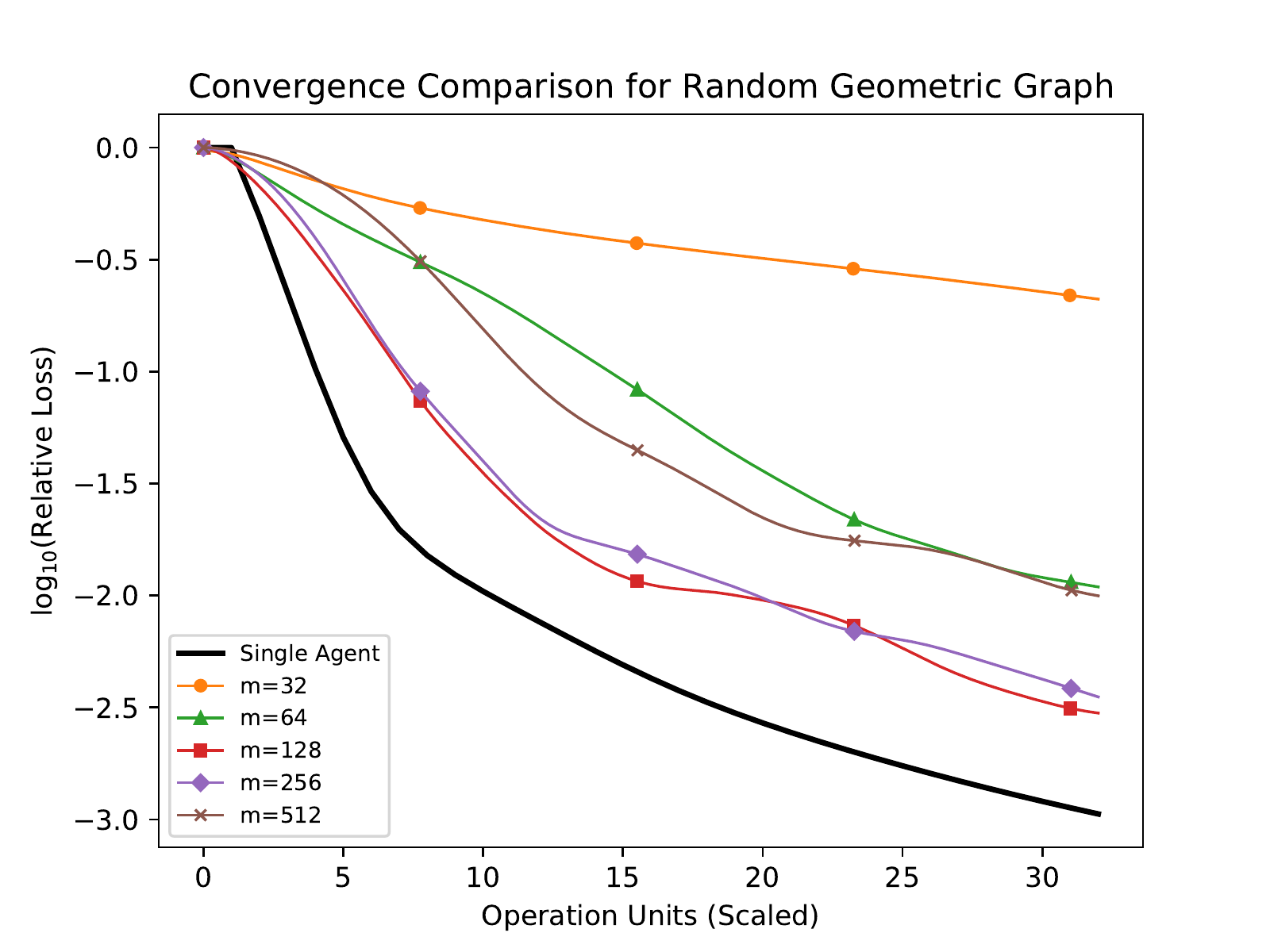}
    \caption{Plots depicting algorithm progress for Erd\H{o}s-R\'{e}nyi ($p=0.1$) and random geometric graphs under the given cost paradigm. With $\mc L_t$ denoting the objective (i.e., the regularized empirical risk) at $\bar{\mb \theta}_t$, and $\mc L_\*$ denoting the minimum value of the objective, the vertical axis represents the base-$10$ logarithm of the relative error defined as $\log_{10}\left(\frac{\mc L_t - \mc L_\*}{\mc L_0 - \mc L_\*}\right)$. The horizontal axis represents units of operations per agent completed (not iteration) normalized such that the single agent case completes one iteration per unit of operation (i.e. the single agent completes 32 iterations). Explicitly, iteration $t$ corresponds to $\frac{n (4 (d/m) + 2 \Delta(G) + 7) + 5 (d/m)}{n (4 d + 1) + 5 d} t$ on the horizontal axis (except for the single agent case, where iteration $t$ corresponds to $t$ on the horizontal axis). In short, settings with fewer operations per agent per iteration complete more iterations.}
    \label{fig:comparisons-cost}
\end{figure}

\printbibliography

\section{Proof of \ref{thm:main}}\label{apx:Proofs}
As the dual parameters are not important for our purposes, our goal is to convert the established saddle-point convergence rates of the Chambolle--Pock algorithm \cite{CP16} into primal convergence rates. Similar to \eqref{eq:ergodic-theta_j}, define the temporal average of the other iterates over the first $T$ iterations as
\begin{align*}
    \bar{\mb v}_j &=\frac{1}{T}\sum_{t=1}^T \mb v_{j,t}\\
    \bar{\mb \lambda}_j &=\frac{1}{T}\sum_{t=1}^T \mb \lambda_{j,t}\,,
\end{align*}
for $j\in[m]$, and let $\bar{\mb V}=\bmx{\,\bar{\mb v}_1&\dotsm & \bar{\mb v}_m}$ and $\bar{\mb \lambda} = \bmx{\,\bar{\mb \lambda}_1;&\dotsm;&\bar{\mb \lambda}_n}$. Furthermore, denote the objective of the saddle-point problem \eqref{eq:primal-dual} by
\begin{align}
    \mc E(\mb \theta,\mb V, \mb \lambda) &=  \underbrace{\sum_{j=1}^m r_j(\mb \theta_j)}_{=r(\mb \theta)}+\frac{1}{n}\mb \lambda _j^\T \left(\mb X_j\mb \theta _j+\mb V \mb L\mb e_j\right)-\frac{1}{n}\sum_{i=1}^n\ell_i^*(\lambda_{1,i})\,.
    \label{eq:saddle-point-objective}
\end{align}
With the iterates initialized at zero (i.e., $\mb \theta_{j,0}=\mb 0$, $\mb v_{j,0}=\mb 0$, and $\mb \lambda_{j,0}=\mb 0$ for all $j\in[m]$), and observing that \[\tau \sigma \norm{\mb K}^2\le 1\,,\] we can apply the convergence rate established in \citep[Theorem 1, and Remark 2]{CP16} to obtain
\begin{align*}
   & \mc E(\bar{\mb \theta}, \bar{\mb V},\mb \lambda) - \mc E(\mb \theta, \mb V,\bar{\mb \lambda})\\
   & \le \frac{1}{T}\sum_{j=1}^m\Biggl(\frac{1}{2\tau}\norm{\mb \theta_j -\mb \theta_{j,0}}_2^2 +\frac{1}{2\tau}\norm{\mb v_j - \mb v_{j,0}}_2^2+\frac{1}{2\sigma}\norm{\mb \lambda_j - \mb \lambda_{j,0}}_2^2\\
   &\hspace*{3em} -\frac{1}{n}\left(\mb \lambda_j -\mb \lambda_{j,0}\right)^\T\Big(\mb X_j(\mb \theta_j -\mb\theta_{j,0})+\sum_{j'\in [m]\st j\sim_G j'}\mb v_j-\mb v_{j,0} - \mb v_{j'}+\mb v_{j',0} \Big)\Biggr)\\
   & \le  \frac{1}{T}\sum_{j=1}^m\Biggl(\frac{1}{\tau}\norm{\mb \theta_j -\mb \theta_{j,0}}_2^2 +\frac{1}{\tau}\norm{\mb v_j - \mb v_{j,0}}_2^2+\frac{1}{\sigma}\norm{\mb \lambda_j - \mb \lambda_{j,0}}_2^2\Biggr)\\
   & = \frac{1}{T}\Biggl(\frac{1}{\tau}\norm{\mb \theta}_2^2 +\frac{1}{\tau}\norm{\mb V}_\F^2+\frac{1}{\sigma}\norm{\mb \lambda}_2^2\Biggr)\,,
\end{align*}
for all $\mb \theta$, $\mb V$, and $\mb \lambda$.  Rearranging the terms, we equivalently have
\begin{align*}
    \mc{E}(\bar{\mb \theta},\bar{\mb V},\mb \lambda) -\frac{1}{T\sigma}\norm{\mb \lambda}_2^2 & \le \mc{E}(\mb \theta,\mb V,\bar{\mb \lambda}) + \frac{1}{T\tau}\left(\norm{\mb \theta}^2_2+\norm{\mb V}_\F^2\right)\,. 
\end{align*}

Recalling \eqref{eq:saddle-point-objective}, taking the maximum of the left-hand side with respect to $\mb \lambda$, and applying \cref{lem:infimal-convolution} to the part corresponding to $\mb \lambda_1$,  we have
\begin{equation}
    \begin{aligned}
        & r(\bar{\mb \theta})+\frac{1}{n}\sum_{i=1}^n\ell_i\left((\mb X_1\bar{\mb \theta}_1+\bar{\mb V}\mb L \mb e_1)_i\right)-\frac{1}{T\sigma}\sum_{i=1}^n(\ell'_i((\mb X_1\bar{\mb \theta}_1+\bar{\mb V}\mb L \mb e_1)_i))^2 \\ &+ \sum_{j=2}^m \frac{T\sigma}{4n^2}\norm{\mb X_j \bar{\mb \theta}_j +\bar{\mb V}\mb L \mb e_j}_2^2 \\
        \le & \min_{\mb \theta\in\mbb R^d, \mb V\in\mbb R^{n\times m}} r(\mb \theta) +\frac{1}{n}\sum_{j=1}^m \bar{\mb \lambda} _j^\T \left(\mb X_j\mb \theta _j+\mb V \mb L\mb e_j\right)-\frac{1}{n}\sum_{i=1}^n\ell^*_i(\bar{\lambda}_{1,i})+\frac{1}{T\tau}\left(\norm{\mb \theta}^2_2+\norm{\mb V}_\F^2\right)\,.
    \end{aligned} \label{eq:main}
\end{equation}

Next we establish a few more inequalities depending on the characteristics of the loss function, that together with \eqref{eq:main} yield the desired convergence rates.
\subsection{Lower bound for the left-hand side of \eqref{eq:main}}
\subsubsection{Lipschitz loss}
We first consider the case of Lipschitz loss functions \eqref{eq:Lipschits-condition}. Using convexity of $\ell_i(\cdot)$, we can write
\begin{align*}
    & \frac{1}{n}\sum_{i=1}^n\ell_i\left((\mb X_1\bar{\mb \theta}_1+\bar{\mb V}\mb L \mb e_1)_i\right)\nonumber\\
    &\ge \frac{1}{n}\sum_{i=1}^n\ell_i\left((\mb X\bar{\mb \theta})_i\right) - \ell'_i((\mb X\bar{\mb \theta})_i) \sum_{j=2}^m\left(\mb X_j \bar{\mb \theta}_j+\bar{\mb V}\mb L \mb e_j\right)_i\nonumber\\
    & \ge \frac{1}{n}\sum_{i=1}^n\ell_i\left((\mb X\bar{\mb \theta})_i\right) - \frac{m-1}{T\sigma}\sum_{i=1}^n(\ell'_i((\mb X\bar{\mb \theta})_i))^2 - \frac{T\sigma}{4n^2}\sum_{j=2}^m \norm{\mb X_j \bar{\mb \theta}_j+\bar{\mb V}\mb L \mb e_j}_2^2\,,
\end{align*}
where the second inequality is an application of the basic inequality $2ab\le a^2+b^2$. By construction, we have 
\begin{align*}
    \mb X_1\bar{\mb \theta}_1+\bar{\mb V}\mb L \mb e_1 + \sum_{j=2}^m \mb X_j \bar{\mb \theta}_j+\bar{\mb V}\mb L \mb e_j &= \mb X \bar{\mb \theta}\,.
\end{align*}
 Therefore, in view of \eqref{eq:Lipschits-condition}, what we have shown is
\begin{equation}
    \begin{aligned}
         & r(\bar{\mb \theta})+\frac{1}{n}\sum_{i=1}^n\ell_i\left((\mb X_1\bar{\mb \theta}_1+\bar{\mb V}\mb L \mb e_1)_i\right)-\frac{1}{T\sigma}\sum_{i=1}^n(\ell'_i((\mb X_1\bar{\mb \theta}_1+\bar{\mb V}\mb L \mb e_1)_i))^2 + \\&\sum_{j=2}^m \frac{T\sigma}{4n^2}\norm{\mb X_j \bar{\mb \theta}_j+\bar{\mb V}\mb L \mb e_j}_2^2\\ &\ge  \frac{1}{n}\sum_{i=1}^n\ell_i\left((\mb X\bar{\mb \theta})_i\right) +r(\bar{\mb \theta}) - \frac{mn\rho^2}{T\sigma}\,.
    \end{aligned}\label{eq:Lip-LB}
\end{equation}

\subsubsection{Square root Lipschitz loss}
The second case we consider is that of the square root Lipschitz loss functions \eqref{eq:sqrt-Lipschitz-condition}. It follows from \eqref{eq:sqrt-Lipschitz-condition} that
\begin{align*}
   \sum_{i=1}^n(\ell'_i((\mb X_1\bar{\mb \theta}_1+\bar{\mb V}\mb L \mb e_1)_i))^2 & \le \rho^2\sum_{i=1}^n\ell_i((\mb X_1\bar{\mb \theta}_1+\bar{\mb V}\mb L \mb e_1)_i) \,.
\end{align*}
For sufficiently large $T$ we have $\gamma \defeq n\rho^2/(T\sigma)<1/m$, and we can lower bound the left-hand side of \eqref{eq:main}, excluding the term $r(\bar{\mb \theta})$, as
\begin{align}
        & \frac{1}{n}\sum_{i=1}^n\ell_i\left((\mb X_1\bar{\mb \theta}_1+\bar{\mb V}\mb L \mb e_1)_i\right)-\frac{1}{T\sigma}\sum_{i=1}^n(\ell'_i((\mb X_1\bar{\mb \theta}_1+\bar{\mb V}\mb L \mb e_1)_i))^2 + \sum_{j=2}^m \frac{T\sigma}{4n^2}\norm{\mb X_j \bar{\mb \theta}_j+\bar{\mb V}\mb L \mb e_j}_2^2\nonumber \\
         & \ge   \left( 1-\gamma\right)\frac{1}{n}\sum_{i=1}^n\ell_i\left((\mb X_1\bar{\mb \theta}_1+\bar{\mb V}\mb L \mb e_1)_i\right) + \sum_{j=2}^m \frac{T\sigma}{4n^2}\norm{\mb X_j \bar{\mb \theta}_j+\bar{\mb V}\mb L \mb e_j}_2^2\nonumber\\
         & \ge \left( 1-\gamma \right)\left(\frac{1}{n}\sum_{i=1}^n\ell_i\left((\mb X\bar{\mb \theta})_i\right) - \ell'_i((\mb X\bar{\mb \theta})_i) \sum_{j=2}^m\left(\mb X_j \bar{\mb \theta}_j+\bar{\mb V}\mb L \mb e_j\right)_i\right)\nonumber  \\& + \sum_{j=2}^m \frac{T\sigma}{4n^2}\norm{\mb X_j \bar{\mb \theta}_j+\bar{\mb V}\mb L \mb e_j}_2^2\,,\label{eq:SQL-B1}
\end{align}
where we used the convexity of the function $\ell_i(\cdot)$ in the second line. Again using the basic inequality $2ab\le a^2 + b^2$, we have
\begin{align}
    & \frac{1}{n}\sum_{i=1}^n \ell'_i((\mb X\bar{\mb \theta})_i) \sum_{j=2}^m\left(\mb X_j \bar{\mb \theta}_j+\bar{\mb V}\mb L \mb e_j\right)_i\nonumber\\
    & \le \sum_{i=1}^n \frac{(1-\gamma)(m-1)}{T\sigma}\left(\ell'_i((\mb X\bar{\mb \theta})_i)\right)^2 + \frac{T\sigma}{4(1-\gamma)n^2}\sum_{j=2}^m\left(\mb X_j \bar{\mb \theta}_j+\bar{\mb V}\mb L \mb e_j\right)_i^2\nonumber\\
    & = \frac{(1-\gamma)(m-1)}{T\sigma}\sum_{i=1}^n \left(\ell'_i((\mb X\bar{\mb \theta})_i)\right)^2+ \frac{T\sigma}{4(1-\gamma)n^2}\sum_{j=2}^m \norm{\mb X_j \bar{\mb \theta}_j+\bar{\mb V}\mb L \mb e_j}_2^2\,.\label{eq:SQL-B2}
\end{align}
By \eqref{eq:sqrt-Lipschitz-condition} we also have  
\begin{align*}
    \sum_{i=1}^n(\ell'_i((\mb X\bar{\mb \theta})_i))^2 & \le \rho^2\sum_{i=1}^n\ell_i((\mb X\bar{\mb \theta})_i)\,,
\end{align*}
which together with \eqref{eq:SQL-B1} and \eqref{eq:SQL-B2}, and by adding back the term $r(\bar{\mb \theta})$, yields
\begin{align}
    & r(\bar{\mb \theta}) + \frac{1}{n}\sum_{i=1}^n\ell_i\left((\mb X_1\bar{\mb \theta}_1+\bar{\mb V}\mb L \mb e_1)_i\right)-\frac{1}{T\sigma}\sum_{i=1}^n(\ell'_i((\mb X_1\bar{\mb \theta}_1+\bar{\mb V}\mb L \mb e_1)_i))^2\nonumber\\&  + \sum_{j=2}^m \frac{T\sigma}{4n^2}\norm{\mb X_j \bar{\mb \theta}_j+\bar{\mb V}\mb L \mb e_j}_2^2\nonumber\\
     &\ge r(\bar{\mb \theta}) +\left( 1-\gamma \right)\left(1-\gamma(1-\gamma)(m-1)\right)\frac{1}{n}\sum_{i=1}^n\ell_i\left((\mb X\bar{\mb \theta})_i\right)\nonumber\\
     &\ge  \left(1-m\gamma\right)\left(\frac{1}{n}\sum_{i=1}^n\ell_i\left((\mb X\bar{\mb \theta})_i \right)+r(\bar{\mb \theta})\right)\label{eq:SQL-LB}
\end{align}
\subsection{Upper bound for the right-hand side of \eqref{eq:main}}
Furthermore, the right-hand side of the inequality \eqref{eq:main} can be bounded as 
\begin{align*}
    & \min_{\mb \theta\in\mbb R^d, \mb V\in\mbb R^{n\times m}} r(\mb \theta)+ \frac{1}{n}\sum_{j=1}^m \bar{\mb \lambda} _j^\T \left(\mb X_j\mb \theta _j+\mb V \mb L\mb e_j\right)-\frac{1}{n}\sum_{i=1}^n\ell_i^*(\bar{\lambda}_{1,i})+ \frac{1}{T\tau}\left(\norm{\mb \theta}^2_2+\norm{\mb V}_\F^2\right)\\
    &\le \min_{\mb \theta\in\mbb R^d, \mb V\in\mbb R^{n\times m}}  r(\mb \theta)+\frac{1}{n}\sum_{i=1}^n \ell_i\left((\mb X_1\mb \theta _1+\mb V \mb L\mb e_1)_i\right) +\frac{1}{n}\sum_{j=2}^m \bar{\mb \lambda} _j^\T \left(\mb X_j\mb \theta _j+\mb V \mb L\mb e_j\right) +\\ &\hspace*{8em} \frac{1}{T\tau}\left(\norm{\mb \theta}^2_2+\norm{\mb V}_\F^2\right)\,.
\end{align*}

Imposing the constraints $\mb X_j\mb \theta_j+\mb V\mb L\mb e_j=\mb 0$ for $j=2,\dotsc,m$, can only increase the value of minimum on the right-hand side. Namely, we have
\begin{align}
    & \min_{\mb \theta\in\mbb R^d, \mb V\in\mbb R^{n\times m}} r(\mb \theta)+ \frac{1}{n}\sum_{j=1}^m \bar{\mb \lambda} _j^\T \left(\mb X_j\mb \theta _j+\mb V \mb L\mb e_j\right)-\frac{1}{n}\sum_{i=1}^n\ell^*_i(\bar{\lambda}_{1,i})+ \frac{1}{T\tau}\left(\norm{\mb \theta}^2_2+\norm{\mb V}_\F^2\right)\nonumber\\
     &\le  \min_{\mb \theta\in\mbb R^d, \mb V\in\mbb R^{n\times m}} r(\mb \theta)+\frac{1}{n}\sum_{i=1}^n \ell_i\left((\mb X_1\mb \theta _1+\mb V \mb L\mb e_1)_i\right) + \frac{1}{T\tau}\left(\norm{\mb \theta}^2_2+\norm{\mb V}_\F^2\right)\nonumber\\
     &\hphantom{\min_{\mb w\in \mbb R^n}}\sbjto\ \mb X_j\mb \theta _j+\mb V \mb L\mb e_j=\mb 0\,,\ \text{for}\ j\in [m]\backslash\{1\}\nonumber\\
     & \le \min_{\mb V\in \mbb R^{n\times m}} \frac{1}{T\tau}\norm{\mb V}_\F^2+\frac{1}{n}\sum_{i=1}^n \ell_i\left((\mb X\hat{\mb \theta})_i\right)+r(\hat{\mb \theta})+\frac{1}{T\tau}\norm{\hat{\mb \theta}}^2_2\nonumber\\
     &\sbjto\ \mb X_j\hat{\mb \theta}_j+\mb V \mb L\mb e_j=\mb 0\,,\ \text{for}\ j\in [m]\backslash\{1\}\nonumber\\ 
    & \le \frac{1}{n}\sum_{i=1}^n \ell_i\left((\mb X\hat{\mb \theta})_i\right) +r(\hat{\mb \theta})+\frac{1}{T\tau}\norm{\hat{\mb \theta}}^2_2+\nonumber\\ &\hspace*{4em} \frac{1}{T\tau}\norm{\mb (\mb L \otimes \mb I)^\dagger}^2\norm{\bmx{\sum_{j=2}^m \mb X_j \hat{\mb \theta}_j;& -\mb X_2\hat{\mb \theta}_2;& \dotsm;& -\mb X_m\hat{\mb \theta}_m}}_2^2\nonumber\\
    & \le \frac{1}{n}\sum_{i=1}^n \ell_i\left((\mb X\hat{\mb \theta})_i\right) +r(\hat{\mb \theta})+\frac{1}{T\tau}\norm{\hat{\mb \theta}}^2_2+ \frac{2}{T\tau}\norm{\mb (\mb L \otimes \mb I)^\dagger}^2\norm{\mb X}^2\norm{\hat{\mb \theta}}_2^2\label{eq:upper-bound}\,,
\end{align}
where $\hat{\mb \theta}$ is the empirical risk minimizer given by \eqref{eq:ERM}, and we used the bound 
\[\norm{\bmx{\sum_{j=2}^m \mb X_j \hat{\mb \theta}_j;& -\mb X_2\hat{\mb \theta}_2;& \dotsm;& -\mb X_m\hat{\mb \theta}_m}}_2^2\le \norm{\mb X}^2\norm{\hat{\mb \theta}}_2^2+\max_{j\in[m]\backslash\{1\}}\norm{\mb X_j}^2\norm{\hat{\mb \theta}}_2^2\,.\]

\subsection{Convergence of the regularized empirical risk}
We are now ready to derive the convergence rates under the loss models \eqref{eq:Lipschits-condition} and \eqref{eq:sqrt-Lipschitz-condition}.
\subsubsection{Lipschitz loss}
In the case of Lipschitz loss model \eqref{eq:Lipschits-condition}, the bounds \eqref{eq:main}, \eqref{eq:Lip-LB}, and \eqref{eq:upper-bound} guarantee that 
\begin{align*}
    &\frac{1}{n}\sum_{i=1}^n\ell_i\left((\mb X\bar{\mb \theta})_i\right)+r(\bar{\mb \theta})\\
    &\le \frac{1}{n}\sum_{i=1}^n \ell_i\left((\mb X\hat{\mb \theta})_i\right) + r(\hat{\mb \theta}) +\frac{1}{T\tau}\left(1+2\norm{\mb (\mb L \otimes \mb I)^\dagger}^2\norm{\mb X}^2\right)\norm{\hat{\mb \theta}}_2^2+ \frac{mn\rho^2}{T\sigma}\\
    & \le \frac{1}{n}\sum_{i=1}^n \ell_i\left((\mb X\hat{\mb \theta})_i\right) + r(\hat{\mb \theta}) +\frac{1}{T\tau}\left(1+\frac{2\chi^2}{\delta^2}\right)R^2+ \frac{mn\rho^2}{T\sigma}\,.
\end{align*}
Using the values of $\sigma$ and $\tau$ prescribed by \ref{thm:main}, we get \eqref{eq:main-bound-Lipschitz}.

\subsubsection{Square root Lipschitz loss}
Similarly, for square root Lipschitz losses \eqref{eq:sqrt-Lipschitz-condition}, it follows from \eqref{eq:main}, \eqref{eq:SQL-LB}, and \eqref{eq:upper-bound} that for $T\ge 2mn\rho^2/\sigma$ we have
\begin{align*}
   & \frac{1}{n}\sum_{i=1}^n\ell_i\left((\mb X\bar{\mb \theta})_i\right)\\
   & \le  \left(1-\frac{mn\rho^2}{T\sigma}\right)^{-1}\left(\frac{1}{n}\sum_{i=1}^n \ell_i\left((\mb X\hat{\mb \theta})_i\right) + r(\hat{\mb \theta}) +\frac{1}{T\tau}\left(1+2\norm{\mb (\mb L \otimes \mb I)^\dagger}^2\norm{\mb X}^2\right)\norm{\hat{\mb \theta}}_2^2\right)\\
   &\le \left(1+\frac{2mn\rho^2}{T\sigma}\right)\left(\frac{1}{n}\sum_{i=1}^n \ell_i\left((\mb X\hat{\mb \theta})_i\right) + r(\hat{\mb \theta}) +\frac{1}{T\tau}\left(1+\frac{2\chi^2}{\delta^2}\right)R^2\right)\,.
\end{align*}
Using the values of $\sigma$ and $\tau$ prescribed by \ref{thm:main} yields \eqref{eq:main-bound-sqrt-Lipschitz}.

\subsection{Auxiliary Lemma}
\begin{lem} \label{lem:infimal-convolution} Let $f_1$ and $f_2$ be differentiable (closed) convex functions defined over a linear space $\mc X$. Denote their corresponding convex conjugate functions defined on the dual space $\mc X^*$ respectively by $f_1^*$ and $f_2^*$. For all $\mb u\in \mc X$ we have
    \begin{align*}
        \max_{\mb v\in \mc X^*} \inp{\mb u,\mb v} - \left(f^*_1(\mb v) + f^*_2(\mb v)\right) & \ge f_1(\mb u) - f^*_2(\nabla f_1(\mb u))\,.
    \end{align*}
\end{lem}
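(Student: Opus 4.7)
The plan is to lower bound the maximum in the definition by plugging in a specific choice of dual variable $\mb v$, rather than computing the maximum exactly. The natural candidate is $\mb v = \nabla f_1(\mb u)$, because this choice will make the part of the objective involving $f_1^*$ collapse via the Fenchel--Young equality, leaving exactly the desired expression.

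More concretely, the key convex analysis fact I will use is that if $f_1$ is a closed convex function that is differentiable at $\mb u$, then $\nabla f_1(\mb u) \in \partial f_1(\mb u)$, and hence the Fenchel--Young inequality is attained with equality:
\[
    f_1(\mb u) + f_1^*(\nabla f_1(\mb u)) = \inp{\mb u, \nabla f_1(\mb u)}\,.
\]
Rearranging this identity gives $\inp{\mb u,\nabla f_1(\mb u)} - f_1^*(\nabla f_1(\mb u)) = f_1(\mb u)$. Subtracting $f_2^*(\nabla f_1(\mb u))$ from both sides then shows that evaluating the objective $\inp{\mb u,\mb v} - f_1^*(\mb v) - f_2^*(\mb v)$ at the point $\mb v = \nabla f_1(\mb u)$ returns exactly the right-hand side of the claim.

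The concluding step is simply the trivial observation that the maximum of a function is at least the value of the function at any particular feasible point, so the displayed inequality in the lemma follows by using $\mb v = \nabla f_1(\mb u)$ as a feasible test point. There is essentially no obstacle; the only thing to be careful about is to justify the use of Fenchel--Young \emph{equality} (not just inequality), which requires the differentiability hypothesis on $f_1$ stated in the lemma. Note that $f_2$ need not enter the argument beyond evaluation of $f_2^*$ at a specific point, so its differentiability is not actually needed for this particular direction of the bound; the lemma statement includes it only for symmetry.
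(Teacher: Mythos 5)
Your proof is correct, but it takes a genuinely different route from the paper's. The paper identifies the supremum as the conjugate of $f_1^*+f_2^*$ and rewrites it as the infimal convolution $\min_{\mb w} f_1(\mb u-\mb w)+f_2(\mb w)$, then linearizes $f_1$ around $\mb u$ via the gradient inequality and recognizes the remaining minimization over $\mb w$ as $-f_2^*(\nabla f_1(\mb u))$. You instead never compute or transform the maximum at all: you evaluate the objective at the single test point $\mb v=\nabla f_1(\mb u)$ and invoke the Fenchel--Young \emph{equality} $f_1(\mb u)+f_1^*(\nabla f_1(\mb u))=\inp{\mb u,\nabla f_1(\mb u)}$, which holds because differentiability gives $\nabla f_1(\mb u)\in\partial f_1(\mb u)$. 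Both arguments use differentiability of $f_1$ in an essential way and neither needs differentiability of $f_2$, but your version is more elementary and arguably more robust: the paper's first step, equating $\sup_{\mb v}\inp{\mb u,\mb v}-(f_1^*(\mb v)+f_2^*(\mb v))$ with the infimal convolution of $f_1$ and $f_2$, is the biconjugate identity $(f_1^*+f_2^*)^*=f_1\mathbin{\square}f_2$, which in general requires a qualification condition (or closedness/exactness of the infimal convolution) and attainment of the inner minimum, whereas your argument needs only the trivial fact that a supremum dominates the value at any feasible point. The infimal-convolution viewpoint buys a structural interpretation (the left-hand side \emph{is} $(f_1\mathbin{\square}f_2)(\mb u)$ when the identity applies, so one sees exactly how much is lost in the bound), but for the stated one-sided inequality your direct evaluation is the cleaner proof.
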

\begin{proof}
    The result follows from the duality of summation and \emph{infimal convolution} \cite[Proposition 13.24]{BC2011}, that is
    \begin{align*}
        \max_{\mb v\in \mc X^*} \inp{\mb u,\mb v} - \left(f^*_1(\mb v) + f^*_2(\mb v)\right)   & = \min_{\mb w \in \mc X} f_1(\mb u - \mb w) + f_2(\mb w)\\
                                                                                & \ge \min_{\mb w\in \mc X} f_1(\mb u) - \inp{\nabla f_1(\mb u),\mb w} + f_2(\mb w)\\
                                                                                & = f_1(\mb u) - f^*_2(\nabla f_1(\mb u))\,.
    \end{align*}
\end{proof}

\end{document}